\numberwithin{equation}{section}
\newtheorem{theorem}{Theorem}[section]
\newtheorem{lemma}[theorem]{Lemma}
\newtheorem{proposition}[theorem]{Proposition}
\theoremstyle{definition}
\newtheorem{definition}[theorem]{Definition}
\theoremstyle{remark}
\newcommand{\Div}{\operatorname{div}}
\newcommand{\Grad}{\nabla}
\newcommand{\vr}{\varrho}
\newcommand{\dx}{{\rm d}x}
\newcommand{\dt}{{\rm d}t}
\newcommand{\dxdt}{\dx \ \dt}
\newcommand{\vc}[1]{{\bm{#1}}}
\newcommand{\de}{\delta}
\newcommand{\om}{\omega}
\newcommand{\R}{\mathbb{R}}
\newcommand{\e}{\varepsilon}
\begin{document}

\title[On a tumor growth model]
	  {On a nonlinear model for  tumor growth  in a cellular medium}

\author[Donatelli]{Donatella Donatelli}
\address[Donatelli]{\newline
Departement of Engineering Computer Science and Mathematics\\
University of L'Aquila\\
67100 L'Aquila, Italy.}
\email[]{\href{donatella.donatelli@univaq.it}{donatella.donatelli@univaq.it}}
\urladdr{\href{http://univaq.it/~donatell}{univaq.it/\~{}donatell}}

\author[Trivisa]{Konstantina Trivisa}
\address[Trivisa]{\newline
Department of Mathematics \\ University of Maryland \\ College Park, MD 20742-4015, USA.}
\email[]{\href{http://www.math.umd.edu}{trivisa@math.umd.edu}}
\urladdr{\href{http://www.math.umd.edu/~trivisa}{math.umd.edu/\~{}trivisa}}

\date{\today}

\subjclass[2010]{Primary: 35Q30, 76N10; Secondary: 46E35.}

\keywords{Tumor growth models, cancer progression, mixed models, moving domain, penalization, existence.}

\thanks{}

\maketitle

\begin{abstract}
We investigate the dynamics of a nonlinear model for tumor growth within a cellular medium. In this setting the ``tumor" is viewed as a multiphase flow consisting of  cancerous cells in either proliferating phase or quiescent phase and a collection of cells accounting  for the ``waste" and/or dead cells in the presence of a nutrient.
Here, the tumor is thought of  as a growing continuum  $\Omega$ with boundary $\partial \Omega$  both of which  evolve in time. The key characteristic of the present model is that the total density of cancerous cells is allowed to vary, which is often  the case within  cellular media. We refer the reader to the articles \cite{Enault-2010}, \cite{LiLowengrub-2013} where compressible type tumor growth models are investigated.
Global-in-time weak solutions are obtained  using an  approach based on penalization of the boundary behavior, diffusion,  viscosity   and pressure in the weak formulation, as well as convergence and compactness arguments in the spirit of  Lions \cite{Lions-1998} (see also \cite{Feireisl-book, DT-MixedModel-2013}).
\end{abstract}

\tableofcontents{}

\section{Introduction}\label{S1}
We investigate the dynamics of a nonlinear model for tumor growth within a cellular medium. In this setting the ``tumor" is viewed as a multiphase flow consisting of  cancerous cells in either proliferating phase or quiescent phase and a collection of cells accounting  for the ``waste" or dead cells in the presence of a nutrient (oxygen).
Here, the tumor is thought of  as a growing continuum  $\Omega$ with boundary $\partial \Omega$  both of which  evolve in time. The key characteristic of the present model is that the total density of cancerous cells is allowed to vary. We refer the reader to the articles by Enault \cite{Enault-2010}, where the compressibility effect of the healthy tissue on the invasiveness of a tumor is investigated and to  Li and Lowengrub \cite{LiLowengrub-2013} for references on related models.

\vspace{0.1cm}

This work focuses on major cells such as {\em cancer cells} and {\em dead cells (or waste)} in the presence of a {\em nutrient.} 
Motivated by the experiments of Roda {\em et al.} (2011, 2012) and the mathematical analysis in Bresch et al. \cite{BreschColinGrenierRibbaSaut-2009}, Friedman  \cite{Friedman-2004},  Chen-Friedman \cite{ChenFriedman-2013},  Zhao \cite{Zhao-2010} and Donatelli-Trivisa \cite{DT-MixedModel-2013} our model is based on  the following biological principles: 
\begin{enumerate}
\item[{\bf [a-1]}] 
Cancer cells are either in a   {\em proliferating phase} or in a  {\em quiescent phase}.
\item[{\bf[a-2]}] 
Proliferating cells die as a result of {\em apoptosis} which is a cell-loss mechanism. 
\item [{\bf[a-3]}] 
Quiescent cells die in part due to {\em apoptosis} but more often due to starvation.
\item[{\bf [a-4]}] 
Cells change from quiescent phase into proliferating phase at a rate which increases with the nutrient level, and they die at a rate which increases as the level of nutrient decreases.

\item[{\bf [a-5]}] 
Proliferating cells, die at a rate which increases as the level of nutrient decreases.
\item[{\bf [a-6]}]  
Proliferating cells become quiescent  at a rate which increases as the nutrient concentration decreases. The proliferation rate increases with the nutrient concentration.
\item[{\bf [a-7]}] 
The total number of cancerous cells can vary as a function of space and time accounting for the case of cancer research investigation within a cellular medium. 
\end{enumerate}
The system is given by a multi-phase flow model and  the tumor is described as a growing continuum $\Omega(t)$ with boundary $\partial \Omega(t)$, both of which evolve in time.

The tumor region $\Omega_t :=\Omega(t)$ is contained in a fixed domain $B$ and the region $B\setminus \Omega_t$ represents the healthy tissue (see \figurename~\ref{regions}). 
\begin{figure}[htbp] 
\centering
\includegraphics[width=5cm]{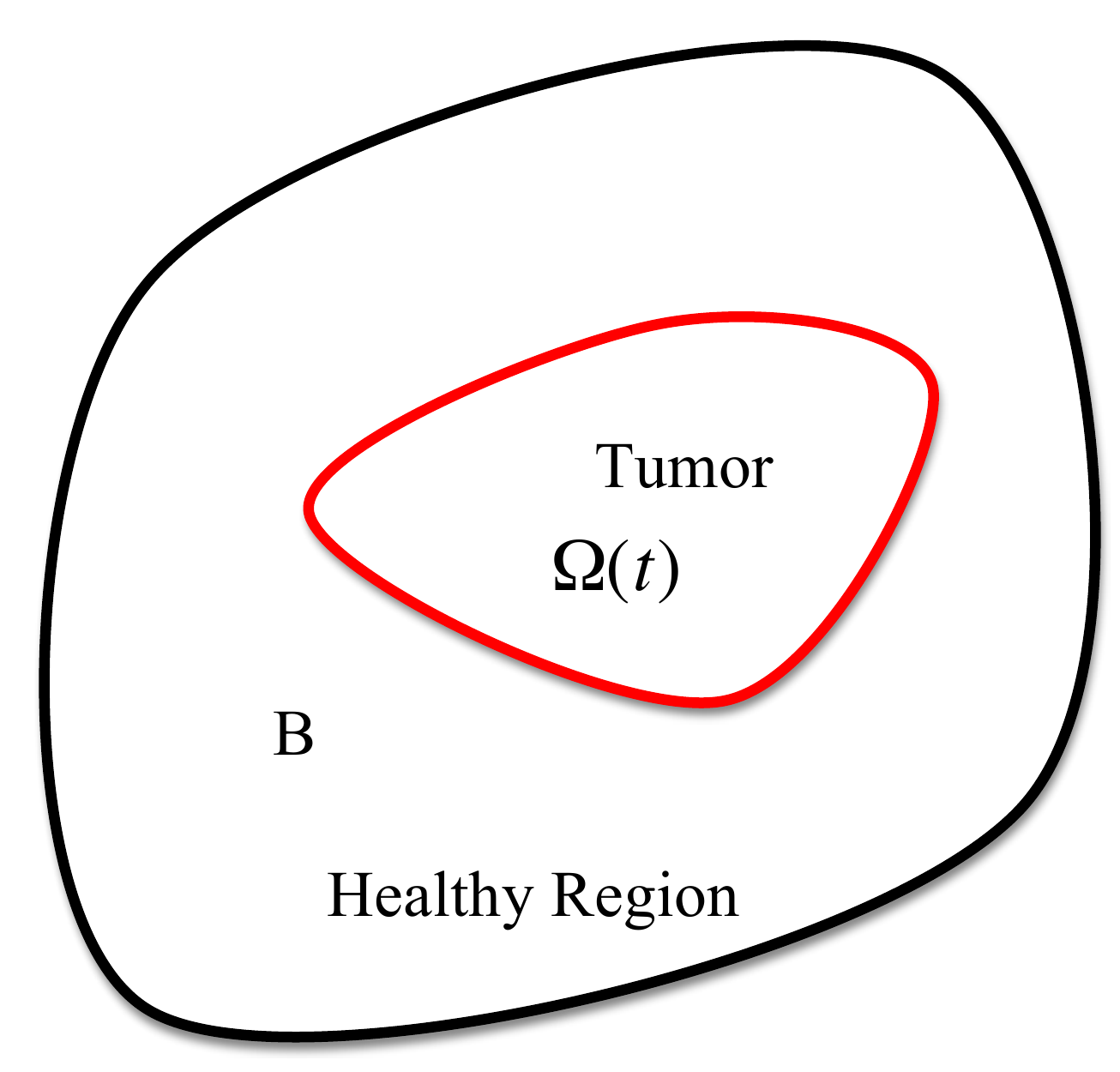}
\caption{Healthy tissue - Tumor regime.} 
\label{regions} 
\end{figure}

\subsection{Description of the model}
Our aim is to describe the evolution in time of the density (number of cells per unit volume) of few cellular species.  Mathematical models describing continuum cell populations and their evolution typically consider the interactions between the cell number density and one or more chemical species that provide nutrients or influence the cell cycle events of a tumor cell population.
In order to obtain the equations giving the evolution of cellular densities, we use the mass-balance principle for every specie,
\begin{equation}
\partial_t \vr + \Grad \cdot (\vr  {\vc{v}}) = \vc{G}, \nonumber
\end{equation}
where $\vr$ may represent  densities of  cancer cells and dead  cells (waste) within the tumor. The function $G$ includes in general proliferation, apoptosis or clearance of cells, and chemotaxis terms as appropriate.

Cancer cells  are of two types: proliferative cells with density $P(x,t)$ and quiescent cells with density $Q(x,t).$  What is here referred as dead cells with density  $D(x,t)$ includes also what is known in the theory of tumor growth as  the {\em waste} or {\em extra-cellular medium}.  These different populations of cells are in the presence of a nutrient (oxygen) with density $C.$
The rates of change from one phase to another are functions of the nutrient concentration C:
\begin{eqnarray}
&& P \to Q \,\, \mbox{at rate}\,\, K_Q (C), \nonumber \\
&& Q \to P \,\, \mbox{at rate}\,\, K_P (C), \nonumber \\
&& P \to D \,\, \mbox{at rate}\,\, K_A  (C), \nonumber \\
&& Q \to D \,\, \mbox{at rate}\,\, K_D (C),\nonumber
\end{eqnarray}
where $K_Q(C)$ denotes the rate by which proliferating cells change into quiescent cells, 
$K_P(C)$ denotes the rate by which quiescent  cells change into proliferating cells, 
$K_A(C)$ stands for apoptosis, $K_D(C)$ denotes the rate by which quiescent cells die. Finally, dead cells are removed at rate $K_R$ (independent of $C$), and the rate of cell proliferation (new births) is $K_B$
(see \eqref{G}). 

The total density of the mixture is denoted by $\vr_f$ and is given by\\
\begin{equation}
 \vr= \vr(x,t) =  [P+Q+D](x,t).
\label{density}
\end{equation}
Biologically that means the total density of cancerous cells may vary which is the case when the waste produced by death of any kind does not necessarily remain within the extra cellular medium. 

\subsection{The velocity of the tumor}
The present work considers the mechanical interactions between the various tumor cells in order to see how the mechanical properties of the tumor, and the tissue in which the tumor grows, influence tumor growth. The tumor velocity $\vc{v}$ reflects the continuous motion within the tumor region typically due 
to proliferation and removal of cells and is here given by an alternative to {\em Darcy's Law} known in the porous medium literature as  {\em Forchheimer's equation} 
\begin{equation}
\partial_t (\vr \vc{v}) + \Div (\vr \vc{v} \otimes \vc{v}) +  \Grad \sigma(P, Q, D) = \mu \Delta \vc{v}-\frac{\mu}{K} \vc{v}, \label{pressure2}
\end{equation}
\smallskip
where $\mu$  is a positive constant describing the viscous-like properties of tumor cells, $K$ denotes the permeability, and $\sigma$ denotes the pressure given by, 
\begin{equation}
\sigma(P,Q,D) = P^m+ Q^m+ D^m,\,\,\, m>\frac{3}{2}. \label{pressure-com}
\end{equation}


Equation \eqref{pressure2} can be interpreted as follows.  The tumor tissue is in this setting ``fluid-like" and  the tumor cells  ``flow" through the cellular medium  like a fluid flows through a porous medium, obeying Forchheimer's law. More specifically,

\begin{itemize}
\item The second term on the right-hand side of \eqref{pressure2} is the usual Darcy's law, and in the  present setting  results from  the {\em friction} of the tumor cells with the extracellular matrix. 

\item  The first term of the right-hand side,  is a  dissipative force density and  results from the {\em internal cell friction} due to cell volume changes.

\item The present work takes into consideration the variation of the cell densities and velocity variations within the cellular medium which are  often considered negligible  in biological settings but are rather substantial in some regimes of interest. 
\end{itemize}

\subsection{Equations for the populations  of  cells}
The mass conservation laws for the densities of the proliferative cells $P,$ quiescent cells $Q$ and dead cells $D$ take the following form:

\begin{eqnarray}
&& \partial_t P + \Div (P \vc{v}) = \vc{G_P}, \label{dP}\\
&& \partial_t Q  + \Div (Q \vc{v}) = \vc{G_Q}, \label{dQ}\\
&& \partial_t D + \Div (D \vc{v}) = \vc{G_D}. \label{dD}
\end{eqnarray}

Following Friedman \cite{Friedman-2004}, the source terms ${\bf \{G_P, G_Q, G_D\}}$ are of the form
\begin{equation}
\begin{split}
& \vc{G_P} =  \left(K_B C - K_Q (\bar C- C) - K_A(\bar C - C)\right) P + K_P C Q\\
& \vc{G_Q} = K_Q (\bar C- C)P - \left(K_P C + K_D(\bar C - C)\right) Q\\
& \vc{G_D} =  K_A (\bar C- C)P  + K_D (\bar C-C) Q - K_R D,
\end{split}\nonumber
\end{equation}
where $C$ is the nutrient concentration. 

Without loss of generality (cf.\ \cite{BreschColinGrenierRibbaSaut-2009} and the references therein) we consider here ${\bf \{G_P, G_Q, G_D\}}$  in the following simplified version:\\
\begin{equation}
\begin{split} \label{G}
& \vc{G_P} =  \left(K_B C - K_Q (\bar C- C)- K_A(\bar C - C)\right) P \\
&  \vc{G_Q} =   - \left(K_P C + K_D(\bar C - C)\right) Q\\
&  \vc{G_D} =  - K_R D. 
\end{split}
\end{equation}

\subsection{A linear diffusion equation for the nutrient concentration}
Unlike tumor cells the density of the nutrient (oxygen) obeys a linear diffusion equation.
It is well known that tumor cells consume nutrients, which  diffuse into the tumor tissue
from the surrounding tissue (cf. \cite{RCM-2007}). The nutrient concentration $C$ satisfies a linear diffusion equation of the form\\
\begin{equation}
\frac{\partial C}{\partial t} = D_1 \Delta C -  \left(K_1 K_P CP + K_2 K_Q(\bar{C}-C)Q\right)C. \nonumber
\end{equation}
and for simplicity, we take (see \cite{Friedman-2004}) , 
\begin{equation}
\frac{\partial C}{\partial t} = \nu \Delta C -  K_{C} C, \label{dC2}
\end{equation}
where $\nu>0$ is a diffusion coefficient and without loss of generality we consider $K_{C}=1$.  We refer the reader also to \cite{BreschColinGrenierRibbaSaut-2009}, where a stationary (elliptic) equation  was introduced referring to the case  when the diffusion time-scale of the oxygen is much lower than the time scale of cellular division.
The present setting takes into consideration the dynamic aspects of the diffusion.

\smallskip
Adding \eqref{dP}-\eqref{dD} and taking into consideration \eqref{density} and \eqref{G} we arrive at the following transport relation for the evolution of the total density of the mixture
 \begin{align}
\partial_t \vr + \Div (\vr \vc{v}) =& \vc{G_P} + \vc{G_Q}+\vc{G_D}\nonumber\\
=& (K_{A}+K_{B}+K_{Q})CP-(K_{A}+K_{Q})\bar{C}P\nonumber\\
&-K_{D}\bar{C}Q+(K_{D}-{K_{P}})CQ-K_{R}D.
\label{divcon}
\end{align}
Our aim is to study the system \eqref{pressure2}-\eqref{dC2} in a spatial domain $\Omega_t$, with a boundary $\Gamma =\partial\Omega_t$ varying in time. 

\subsection{Boundary behavior}
The boundary of the domain $\Omega_t$ occupied by the tumor is described by means of a given velocity $\vc{V}(t, \vc{x}),$ where $t \ge 0$ and $\vc{x} \in \R^3.$ More precisely, assuming $\vc{V}$ is regular, we solve the associated system of differential equations
\begin{equation}
\frac{d}{dt} \vc{X}(t, \vc{x}) = \vc{V}(t, \vc{X}), \,\, t > 0, \,\, \vc{X}(0,\vc{x}) = \vc{x}, \nonumber
\end{equation}
and set
\begin{equation}
\begin{cases}
\!\!\! \! & \Omega_{\tau} = \vc{X}(\tau, \Omega_0), \,\, \mbox{where}\,\, \Omega_0 \subset \R^3\,\, \mbox{is a given domain,}\\
\!\! \!\! & \Gamma_{\tau} = \partial \Omega_{\tau}, \,\, \mbox{and} \,\, Q_{\tau} = \left\{(t,x) | t \in (0,\tau), x\in  \Omega_{\tau}\right\}.
\end{cases}\nonumber
\end{equation}
\smallskip

 The model is closed by giving boundary conditions on the (moving) tumor boundary $\Gamma_{\tau}.$ 
More precisely, we assume that the boundary $\Gamma_{\tau}$ is impermeable,  meaning
\begin{equation}
 (\vc{v} - \vc{V}) \cdot \vc{n}|_{\Gamma_{\tau}} = 0, \,\, \mbox{for any}\,\, \tau \ge 0. \label{BC1}
 \end{equation}
In addition, for {\em viscous} fluids, Navier proposed the boundary condition of the form
\begin{equation}
[\mathbb{S} \vc{n}]_{\mbox{tan}}|_{\Gamma_{\tau}} = 0, \label{BC2}
\end{equation}
with $\mathbb{S}$ denoting the viscous stress tensor which in this context is assumed to be determined through Newton's rheological law
$$
\mathbb{S} = \mu \Big( \Grad \vc{v} + \Grad^{\perp} \vc{v} - {2 \over 3} \Div
\vc{v} \mathbb{I} \Big) + \xi \Div \vc{v} \mathbb{I}.
$$
The constants  $\mu> 0$, $\xi \geq 0$ are respectively the shear and bulk
viscosity coefficients. Condition \eqref{BC2} namely says that the tangential component of the normal 
viscous stress vanishes on $\Gamma_{\tau}.$
The concentration of the nutrient on the boundary satisfies the condition:
\begin{equation}
C|_{\Gamma_t} = 0  \label{BC3}.
\end{equation} 
Finally, the problem \eqref{dP}-\eqref{BC3} is supplemented by the initial conditions
\begin{equation}
\begin{split}
P(0, \cdot) = P_0, \quad Q(0, \cdot) = Q_0, \quad D(0, \cdot) = D_0, \\
C(0, \cdot) = C_{0}\leq\bar{C}, \quad \vc{v}(0,\cdot)=\vc{v}_{0}\ \qquad \quad  \text{in $ \Omega_0$.}
\end{split}
 \label{IC}
\end{equation}
Our main goal is to show the existence of global in time weak solutions to  \eqref{pressure2}-\eqref{IC} for any finite energy  initial data. Related works on the mathematical analysis of cancer  models have been presented by Friedman {\em et al.}  \cite{Friedman-2004}, \cite{ChenFriedman-2013}  who established the local existence of  radial symmetric smooth solutions to a related model. 
 The analysis in \cite{Zhao-2010} treated a parabolic-hyperbolic free boundary problem and provided a unique global solution in the radially symmetric case. In the forth mentioned articles  the tumor tissue is assumed to be a fluid flowing a porous medium and the velocity field is  determined by  Darcy's Law
$$\vc{v} = - \Grad_x \sigma \,\, \mbox{in} \,\, \Omega(t).$$
In \cite{DT-MixedModel-2013} Donatelli and Trivisa obtained the global existence of weak solutions to a nonlinear model for tumor growth in a general domain $\Omega_t \subset \R^3$ without  any kind of symmetry assumptions. The article \cite{DT-MixedModel-2013} treated the tumor tissue as a fluid flowing in a  porous medium with the velocity field given by Brinkman's equation 
$$\Grad \sigma = - \frac{\mu}{K} \vc{v} + \mu \Delta \vc{v},$$
and focused on the case of constant  total density of  cancerous cells. In \cite{DT-MixedModelDrug-2014}, the same authors treat a related nonlinear model and discuss the effect of drug application on tumor growth.
\smallskip

The main contribution of the present article to the existing theory can be characterized
as follows:

\begin{itemize}
\item 
The present work treats the tumor as a mixture with a {\em variable} total density of cancerous cells. In accordance, the velocity of the tumor  verifies an extension of Darcy's law known as Forchheimer's equation obtained by analogy to the Navier-Stokes equation. The global existence of weak solutions within a moving domain in $\R^3$ is obtained without assuming any kind of symmetry. For related works involving compressible-type models for the investigation of tumor growth models we refer the reader to Enault \cite{Enault-2010}, Li and Lowengrub \cite{LiLowengrub-2013} and the references therein.

\item {\color{black} The framework presented here relies on biologically grounded principles [{\bf a-1}]-[{\bf a-7}], which are motivated by experiments performed by Roda {\em et al.} \cite{Roda-etal-2011} \cite{Roda-etal-2012A}, \cite{Roda-etal-2012}  and provide  a  description of the dynamics of the population of cells within the tumor.}
\end{itemize}

\par\smallskip

We establish the global existence of  weak solutions to \eqref{pressure2}-\eqref{IC} on time dependent domains, supplemented with slip boundary conditions. The existence theory for the barotropic Navier-Stokes
system on {\em fixed} spatial domains in the framework of weak solutions was developed in the seminal work of Lions \cite{Lions-1998}. 

\par\smallskip

The main ingredients of our approach can be formulated as follows:
\begin{itemize}
\item  In the construction of a suitable approximating scheme the {\em penalizations} of the boundary behavior, diffusion and viscosity are introduced  in the weak formulation. A penalty approach to slip conditions for {\em stationary incompressible flow} was proposed by Stokes and Carey \cite{StokesCarey-2011} (see also \cite{DT-MixedModel-2013, FeireislKNNS-2013}). In the present setting, the variational (weak) formulation of the Forchheimer's  equation is supplemented by a singular forcing term
\begin{equation} 
\frac{1}{\varepsilon} \int_{\Gamma_t} (\vc{v}-\vc{V}) \cdot {\bf n} \vc{\varphi} \cdot \vc{n} dS_x,\,\,\, \varepsilon>0\,\, \mbox{small}, \label{penalty}
\end{equation}
penalizing the normal component of the velocity on the boundary of the tumor domain.

\item In addition to  \eqref{penalty}, we introduce a {\em variable} shear viscosity coefficient $\mu = \mu_{\omega},$ as well as a {\em variable} diffusion $\nu=\nu_{\omega}$ with $\mu_{\omega}, \nu_{\omega}$  vanishing outside the tumor domain and remaining positive within the tumor domain, to accommodate the time-dependent nature of the boundary.

\item In constructing the approximating problem we employ a number of regularizations/penalizations: $\eta, \e, \de, \om$.
Keeping $\eta, \e, \de, \om$  fixed, we solve the modified problem in a (bounded) reference domain $B \subset \mathbb{R}^3$ chosen in such way that 
$$\bar{\Omega}_{\tau} \subset B \,\, \mbox{for any}\,\, \tau \ge 0.$$  
Letting $\eta \to 0$ we obtain the solution $(P,Q,D)_{\de, \om,\e}$ within the fixed reference domain. 
\item We take the initial densities $(P_0, Q_0, D_0)$ vanishing outside $\Omega_0,$  and letting the penalization  $\varepsilon \to 0$ for fixed $\omega > 0 $ we obtain a ``two-phase" model consisting of the {\em tumor region}  and the {\em healthy tissue} separated by impermeable boundary.  We show that the densities vanish  in part of the reference domain, specifically on $((0,T) \times B) \setminus Q_T.$ 

\item We let first the penalization $\varepsilon$ vanish and next we perform the limit $\omega \to 0$ and $\delta \to 0.$

\item The slip boundary conditions considered  here are suitable in the context of moving domains and biologically relevant as confirmed  by experimental evidence. 
\end{itemize}

\par\smallskip

The paper is organized as follows: Section \ref{S1} presents the motivation, modeling  and   introduces the necessary preliminary material. Section \ref{S2}  provides a weak formulation of the problem and states the main result. Section \ref{S3}  is devoted to the penalization problem and to the construction  of a suitable approximating scheme. In Section \ref{S4}  we  present the modified energy inequality and collect all the uniform bounds satisfied by the solution of the approximating scheme. In Section \ref{S5},  we derive essential pressure estimates. In Section \ref{S6}  the singular limits for $\varepsilon \to 0$ is performed. The key ingredient at this step is the establishment of the strong convergence of the density, which is obtained, in analogy to the theory of compressible Navier-Stokes equation, by establishing the weak continuity of the effective viscous pressure. Subsequently it is proven that in fact   the proliferating, quiescent, dead cells and the nutrient are vanishing in the healthy tissue. In Sections \ref{S7} and \ref{S8}  the singular limits  $ \omega \to 0$ and $\delta \to 0$ are performed successively. 

\section{Weak formulation and main results}\label{S2}
\begin{definition}\label{D2.1}
 We say that $(P, Q, D, \vc{v}, C)$ is a weak solution of problem
(\ref{dP})-(\ref{IC}) supplemented with boundary data satisfying
(\ref{BC1})-(\ref{BC3})  and initial data $(P_0,  Q_0, D_0, \vc{v}_{0}, C_0)$
satisfying (\ref{IC}) provided that the following hold:
\vspace{0.1in}

$\bullet$ $(P,Q, D) \ge 0$ represents a weak solution of \eqref{dP}-\eqref{dQ}-\eqref{dD} on $[0,T]\times\Omega_{\tau}$, i.e., for any test function $\varphi \in C^{\infty}_c ([0,T]\times \mathbb{R}^3), T>0$, for any $\tau\in[0,T]$ the  following integral relations hold
\smallskip

\begin{equation} 
\left.
\begin{array}{l}
\displaystyle{\int_{\Omega_{\tau}}  P \varphi(\tau,\cdot) \, dx  - \int_{\Omega_0}  P_0 \varphi(0,\cdot)dx = }\\
\hspace{1.5cm}\displaystyle{\int_0^{\tau} \!\!\int_{\Omega_t} \left( P \partial_t \varphi + P \vc{v} \cdot \Grad_x \varphi + \vc{G_P}  \varphi(t, \cdot) \right) dx dt}, \\ \\
\displaystyle{\int_{\Omega_{\tau}}  Q \varphi(\tau,\cdot) \, dx - \int_{\Omega_0}  Q_0 \varphi(0,\cdot)dx}  = 
 \\
\hspace{1.5cm}\displaystyle{\int_0^{\tau} \!\!\int_{\Omega_t} \left( Q \partial_t \varphi + P \vc{v} \cdot \Grad_x \varphi + \vc{G_Q}  \varphi(t, \cdot) \right) dx dt,}\\ \\
 \displaystyle{\int_{\Omega_{\tau}} D \varphi(\tau,\cdot) \, dx  - \int_{\Omega_0}  D_0 \varphi(0,\cdot)dx  =} \\
 \hspace{1.5cm}\displaystyle{\int_0^{\tau} \!\!\int_{\Omega_t} \left( D \partial_t \varphi + D \vc{v} \cdot \Grad_x \varphi + \vc{G_D}  \varphi(t, \cdot) \right) dx dt}.
\end{array}
\right\}
 \label{w-Da}
\end{equation}
In particular, 
$$P \in L^{\infty}([0,T]; L^m(\Omega_{\tau})), \,\, Q \in L^{\infty}([0,T]; L^m( \Omega_{\tau})), \,\,D \in L^{\infty}([0,T]; L^m( \Omega_{\tau})). $$ 
We remark that in  the weak formulation, it is convenient that the equations \eqref{dP}-\eqref{dD} hold in the whole space $\mathbb{R}^3$ provided that the densities $P,Q,D$ are extended to be zero outside the tumor domain.
\smallskip

$\bullet$ Forchheimer's equation \eqref{pressure2} holds in the sense of distributions, i.e., for any test function 
$\vc{\varphi} \in C^{\infty}_c(\mathbb{R}^3; \mathbb{R}^3)$ satisfying 
$$ \vc{\varphi} \cdot  \vc{n}|_{\Gamma_{\tau}} = 0\,\, \mbox{for any}\,\, \tau \in [0,T],$$ 
the following integral relation holds
\begin{equation}
\label{w-pressure2}
\begin{split}
 \int_{\Omega_{\tau}}\vr\vc{v}\cdot\vc{\varphi}(\tau,\cdot)dx&-\int_{\Omega_{0}}{ (\vr\vc{v})_0 \cdot \vc{\varphi} (0, \cdot)dx}\\
= \int_{0}^{\tau}\!\! \int_{\Omega_{\tau}} &\Big( \vr \vc{v} \cdot\partial_t \vc{\varphi} + \vr \vc{v} \otimes \vc{v} : \Grad_x \vc{\varphi}\\ 
&+ \sigma(P,Q,D) \Div \vc{\varphi}-\mu
 \Grad_x \vc{v}:\Grad_x \vc{\varphi} - \frac{\mu}{K} \vc{v} \vc{\varphi}\Big)dxdt 
 \end{split}
 \end{equation}
 The impermeability boundary condition \eqref{BC1} is satisfied in the sense of traces, namely

$$\vc{v} \in L^{2}([0,T];W^{1,2}(\mathbb{R}^3;\mathbb{R}^3)),$$
and
$$ (\vc{v - V}) \cdot \vc{n}(\tau, \cdot)|_{\Gamma_{\tau}}=0\,\, \mbox{for a.a.}\,\, \tau \in [0,T].$$

\smallskip

$\bullet$ $C \geq 0$ is a weak solution of \eqref{dC2}, i.e.,  for any test function $\varphi \in C^{\infty}_c ([0,T)\times \mathbb{R}^3), T>0$, for any $\tau\in[0,T]$ 
the  following integral relations hold
\begin{equation}
\begin{split}
\int_{\Omega_{\tau}}&  C \varphi(\tau,\cdot) \, dx - \int_{\Omega_0}  C_0 \varphi(0,\cdot)dx \\
& = \int_0^{\tau} \!\!\int_{\Omega_{t}} \left( C \partial_t \varphi  - \nu \Grad_x C\cdot \Grad_x \varphi -  C  \varphi\right)dx dt. 
\end{split}
\label{wC}
\end{equation}
\smallskip

\end{definition}

The main result of the article now follows. 

\begin{theorem}\label{T2.2}
Let $\Omega_0 \subset \mathbb{R}^3$ be a bounded domain of class $C^{2+\nu}$  and let 
$$\vc{V} \in C^1([0,T]; C^3_c(\mathbb{R}^3; \mathbb{R}^3))$$
 be given. Let the initial data satisfy
 $$P_0 \in L^{m}(\mathbb{R}^3), \,\, Q_0 \in L^{m}(\mathbb{R}^3),\,\, D_0 \in L^{m}(\mathbb{R}^3),\,\, C_0 \in L^{m}(\mathbb{R}^3),$$ 
$$ (P_0, Q_0, D_0, C_{0}) \ge 0, \,\,\,  (P_0, Q_0, D_0, C_{0}) \not\equiv 0, \,\,\, (P_0, Q_0, D_0, C_{0})|_{\mathbb{R}^3 \setminus \Omega_0} =0 $$
for a certain $\displaystyle{m > \frac{3}{2}}.$
Denoting by $\vr$ the total density of cells, namely
$$\vr(x,t) = [P + Q + D](x,t)$$
we require that 
$$(\vr \vc{v})_0 = 0 \,\, a.a.\, on \,\,  \{ \vr_0=0\},\,\, \int_{\Omega_0} \frac{1}{\vr_0} |(\vr \vc{v})_0|^2 \dx < \infty.$$ 
Then the problem \eqref{dP}-\eqref{dC2} with initial data \eqref{IC}  and boundary data \eqref{BC1}, \eqref{BC2} and  \eqref{BC3}  admits a weak solution in the sense specified in Definition \ref{D2.1}.
\end{theorem}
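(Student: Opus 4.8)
\section*{Proof proposal}

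The plan is to construct the weak solution by a multi-layered approximation scheme, passing to the limit in the four parameters $\eta, \varepsilon, \omega, \delta$ in that order, following the strategy outlined in the introduction and in the spirit of Lions and Feireisl for the compressible Navier--Stokes system. First I would set up the approximating problem on a fixed bounded reference domain $B \supset \bar\Omega_\tau$ for all $\tau \in [0,T]$. Here the Forchheimer momentum equation is penalized by the singular boundary term $\frac{1}{\varepsilon}\int_{\Gamma_t}(\vc{v}-\vc{V})\cdot\vc{n}\,\vc{\varphi}\cdot\vc{n}\,dS_x$, the shear viscosity and diffusion coefficients are replaced by $\omega$-regularized versions $\mu_\omega, \nu_\omega$ that remain positive on $\Omega_t$ and vanish (down to a small floor $\omega$) outside it, and a $\delta$-artificial pressure term (typically $\delta(\vr^\beta + \dots)$ with $\beta$ large) is added to the pressure $\sigma$ to gain integrability. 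The parameter $\eta$ governs a further regularization (e.g.\ of the transport equations / a vanishing-viscosity term in the densities or a Galerkin truncation). For fixed $\eta, \varepsilon, \omega, \delta$ one solves the fully regularized system by a fixed-point/Galerkin argument: given $\vc{v}$ in a suitable class, solve the linear transport equations \eqref{dP}--\eqref{dD} and the linear parabolic equation \eqref{dC2} for $(P,Q,D,C)$, then solve the regularized momentum equation for the new velocity, and close the loop via Schauder's theorem; the source terms $\vc{G_P},\vc{G_Q},\vc{G_D}$ from \eqref{G} are linear in $(P,Q,D)$ with coefficients controlled by $C$, so nonnegativity of $C_0 \le \bar C$ propagates to $0 \le C \le \bar C$ by maximum principle, and nonnegativity of $(P,Q,D)$ follows from the renormalized-transport/comparison structure.

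The next step is the energy estimate. Testing the regularized momentum equation by $\vc{v}$ and combining with the (renormalized) continuity/transport relations yields a modified energy inequality controlling $\sup_t \int_B \frac12 \vr|\vc{v}|^2 + \frac{1}{m-1}\sigma(P,Q,D)\,dx$ together with $\int_0^T\!\!\int_B \mu_\omega|\Grad\vc{v}|^2 + \frac{\mu}{K}|\vc{v}|^2\,dx\,dt$ and the penalty dissipation $\frac{1}{\varepsilon}\int_0^T\!\!\int_{\Gamma_t}|(\vc{v}-\vc{V})\cdot\vc{n}|^2\,dS_x\,dt$, plus the $\delta$-pressure contribution; the exponentially-in-time growth coming from the $\vc{G}$-source terms and the work of $\vc{V}$ on the boundary is harmless on the finite interval $[0,T]$. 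These bounds are uniform in $\eta$. Letting $\eta \to 0$ (the easiest limit: the regularization was linear/benign) gives a solution $(P,Q,D,C,\vc{v})_{\delta,\omega,\varepsilon}$ on the fixed domain $B$. Then one needs the improved pressure estimates — the standard Bogovskii-operator test function $\vc{\varphi} = \mathcal{B}[\vr^\nu - \text{mean}]$ — to obtain $L^{q}$ bounds on $\sigma(P,Q,D)$ with $q>1$ (here the exponent $m>3/2$ is exactly what makes the pressure term subcritical relative to the $L^\infty_t L^m_x \cap L^2_t W^{1,2}_x$ control of $\vr|\vc{v}|^2$, so that $\vr|\vc{v}|^2 \in L^{q}$ for some $q>1$), uniformly in $\varepsilon$.

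The heart of the argument is the $\varepsilon \to 0$ limit. The penalty dissipation forces $(\vc{v}-\vc{V})\cdot\vc{n}|_{\Gamma_t} \to 0$, recovering the impermeability condition \eqref{BC1} and decoupling the tumor region $Q_T$ from its complement in $(0,T)\times B$. The main obstacle — and the step I expect to be genuinely hard — is proving strong convergence of the densities $P,Q,D$ (equivalently of $\vr$), which is needed to identify the weak limit of the nonlinear pressure $\sigma(P,Q,D)$ and of the convective term $\vr\vc{v}\otimes\vc{v}$. As in the Lions--Feireisl theory this hinges on establishing weak continuity of the effective viscous flux — here the combination $\sigma(P,Q,D) - \mu_\omega \Div\vc{v}$ (or its appropriate analogue given the non-divergence-form viscous term $\mu\Delta\vc{v}$) — via a commutator lemma involving the Riesz operators, followed by a renormalized-continuity/oscillation-defect-measure argument to upgrade weak to strong convergence; the multicomponent structure ($P,Q,D$ each transported by the same $\vc{v}$ with linear zeroth-order coupling, and a pressure that is separately convex in each) must be handled componentwise but is compatible with this machinery. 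One then also shows that $(P,Q,D,C)$ vanish on $((0,T)\times B)\setminus Q_T$, using that the initial data vanish outside $\Omega_0$, that the transport is along the flow of $\vc{v}$ which agrees normally with $\vc{V}$ on $\Gamma_t$ in the limit, and a Gronwall argument on $\int_{B\setminus\Omega_t}(P+Q+D)\,dx$. Finally, the limits $\omega \to 0$ (removing the artificial viscosity/diffusion floor outside the tumor, where the densities already vanish so the degeneracy is harmless) and $\delta \to 0$ (removing the artificial pressure, again using the uniform improved-integrability estimates and one more round of the effective-viscous-flux/strong-density-convergence argument, now on the physical moving domain) complete the proof, yielding a weak solution in the sense of Definition \ref{D2.1}. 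Throughout, the nonnegativity and the claimed function-space memberships are preserved under each passage to the limit by weak lower semicontinuity and Fatou's lemma.
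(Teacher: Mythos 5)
Your proposal follows the same overall roadmap as the paper: a four-level approximation (Faedo--Galerkin/$\eta$, boundary penalization $\varepsilon$, $\omega$-regularized viscosity and diffusion, $\delta$-artificial pressure) on a fixed reference domain $B$, followed by successive limits $\eta\to 0$, $\varepsilon\to 0$, $\omega\to 0$, $\delta\to 0$, with the effective-viscous-flux/oscillation-defect-measure machinery supplying the strong convergence of the densities. The structure is right, and you correctly identify the effective-viscous-flux argument as the hard core, handled componentwise for $P,Q,D$.

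Two places where you are vaguer than the paper, and where the details matter. First, the pressure estimates: because the penalized Forchheimer equation contains the singular term $\tfrac{1}{\varepsilon}\int_{\Gamma_t}(\vc{v}-\vc{V})\cdot\vc{n}\,\vc\varphi\cdot\vc{n}\,dS_x$, a global Bogovskii-type test function does not give $\varepsilon$-uniform pressure bounds on all of $[0,T]\times B$; the paper chooses test functions $\vc\varphi=\Grad_x\Delta_x^{-1}[\mathbf 1_B(P^\nu+Q^\nu+D^\nu)]$ supported on compact sets $\mathcal K\subset[0,T]\times\overline B$ specially chosen so that $\mathcal K\cap\big(\bigcup_\tau\{\tau\}\times\Gamma_\tau\big)=\emptyset$, which keeps the penalty term out of the computation. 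You should make this localization explicit; otherwise the estimate ``uniformly in $\varepsilon$'' is not obtained. Second, for the step showing $P,Q,D,C$ vanish on $((0,T)\times B)\setminus Q_T$ after $\varepsilon\to 0$, you propose a Gronwall argument on $\int_{B\setminus\Omega_t}(P+Q+D)\,dx$. The intuition is right but, at the level of weak solutions on a time-dependent domain, you cannot simply differentiate that integral in $t$. The paper instead constructs an explicit test function $\varphi=[\min\{\Phi/\eta,1\}]^+$ built from the level-set function $\Phi$ solving $\partial_t\Phi+\Grad_x\Phi\cdot\vc{V}=0$, uses $\Grad_x\Phi=\lambda\,\vc{n}$ on $\Gamma_\tau$ together with the recovered impermeability $(\vc{v}-\vc{V})\cdot\vc{n}|_{\Gamma_\tau}=0$, and invokes a Hardy-type inequality to control $\frac{1}{\delta}(\vc{V}-\vc{v})\cdot\Grad_x\Phi$ in $L^2$ before letting $\eta\to 0$. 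That lemma (the paper's Lemma~\ref{fondlemma}) is a genuine ingredient your sketch replaces by a heuristic; it, rather than a formal Gronwall inequality, is what makes the weak-solution argument close.
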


\section{Approximating Scheme }\label{S3}
In the heart of the approximating procedure presented here lie the so-called  {\em generalized penalty methods}, which entail treating the boundary condition as a weakly enforced constraint. This approach has appeared to be suitable for treating partial slip, free surface, contact and related boundary conditions in viscous flow analysis and simulations. In incompressible  viscous flow modeling such approach provides penalty enforcement of the incompressibility constraint on the velocity field \cite{CareyKrishnan-1982}, \cite{CareyKrishnan-1984},\cite{CareyKrishnan-1985}.  

The form of boundary penalty approximation introduced here has its origin  in  Courant  \cite{Courant-1956}. A penalty approach to slip conditions for stationary incompressible fluids was proposed
by Stokes and Carey \cite{StokesCarey-2011}. Compressible fluid flows in time dependent domains, supplemented with the no-slip boundary conditions, were examined in \cite{FeireislNS-2011} by means of Brinkman's penalization method and in \cite{FeireislKNNS-2013} treating a slip boundary condition. A penalty approach to the analysis of a tumor growth model was presented in \cite{DT-MixedModel-2013} treating the case of a mixed-type tumor growth model.

It is clear   that  applying a penalization method to the slip boundary conditions is much more delicate than the treatment of no-slip boundary conditions. Indeed, in the case of slip boundary conditions we have information only  for  the normal component  $\vc{v} \cdot {\bf n}$  outside $\Omega_{\tau}.$

The central component in the construction of a suitable approximating scheme is the addition of  a singular forcing term
\begin{equation*}
\frac{1}{\varepsilon}  \int_{\Gamma_t} (\vc{v}-\vc{V}) \cdot {\bf n} \vc{\varphi} \cdot \vc{n} dS_x ,\,\,\, \varepsilon>0\,\, \mbox{small}, 
\end{equation*}
penalizing the normal component of the velocity on the boundary of the tumor domain
in the variational formulation of Forchheimer's equation.
\subsection{Penalization}

As typical in time dependent  regimes the penalization can be applied to the interior of a fixed reference domains. In that way we obtain at the limit a two-phase model consisting of the tumor region  $\Omega_\tau$ and a healthy tissue $B\setminus \Omega_{\tau}$ separated by an impermeable interface
$\Gamma_{\tau}.$  As a result an extra stress is produced acting on the fluid by its complementary part outside $\Omega_{\tau}.$

We  choose  $R>0$ such that 
\begin{equation}
\vc{V}|_{[0,T] \times \{|\vc{x}| > R\} }=0, \,\,\, \bar{\Omega}_0 \subset \{|\vc{x}| <R\}\label{RefDom}
\end{equation}
and we take as the reference fixed domain 
$$B = \{|\vc{x}| < 2R\}.$$

In order to eliminate this extra stresses we introduce  a  four level penalization scheme, which relies on the parameters $\eta$ which plays the role of the artificial viscosity in the equations \eqref{dP},\eqref{dQ}, \eqref{dD},  $\varepsilon$ which accounts for the penalization of the boundary behavior, $\omega$ which introduces penalization of the viscosity and diffusion parameters and $\delta$ which represents the artificial pressure and will be instrumental in the establishment of the pressure estimates and in the proof of the strong convergence of the densities. In the description of the approximating scheme below we mention the parameter $\eta$ only briefly and the details of the limit $\eta \to 0$  which have been presented in a series of articles are omitted. We refer the reader to \cite{Donatelli-Trivisa-2006, Feireisl-book} for details.
    
Our approximating scheme relies on:
\begin{enumerate}
\item[{\bf 1.}] A variable shear viscosity coefficient $\mu = \mu_{\omega}(t, \vc{x}),$ where $\mu=\mu_{\omega}$ remains strictly positive in $Q_{T}$ but vanishes in $Q_{T}^{c}$ as $\omega \to 0$, namely $ \mu_{\omega}$
is taken such that 
$$ \mu_{\omega} \in C^{\infty}_c \left([0,T] \times \mathbb{R}^3\right), \,\,\, 0<{\underline{{\mu}}}_{\omega}\le \mu_{\omega}(t,\vc{x}) \le \mu\,\, \mbox{in}\,\, [0,T]\times B,$$
\begin{equation}
\mu_{\omega}=
\begin{cases}
\mu={\mathrm const}>0 & \text{in $Q_{T}$}\\
\mu_{\omega}\to 0 & \text{a.e. in $((0,T)\times B)\backslash Q_{T}$}
\end{cases}\nonumber
\end{equation}
and a variable diffusion coefficient of the nutrient $\nu = \nu_{\omega}(t, \vc{x}),$ where $\nu=\nu_{\omega}$ remains strictly positive in $Q_{T}$ but vanishes in $Q_{T}^{c}$ as $\omega \to 0$, namely $ \nu_{\omega}$
is taken such that 
$$ \nu_{\omega} \in C^{\infty}_c \left([0,T] \times \mathbb{R}^3\right), \,\,\, 0<{\underline{{\nu}}}_{\omega}\le \nu_{\omega}(t,\vc{x}) \le \nu\,\, \mbox{in}\,\, [0,T]\times B,$$
\begin{equation}
\nu_{\omega}=
\begin{cases}
\nu={\mathrm const}>0 & \text{in $Q_{T}$}\\
\nu_{\omega}\to 0 & \text{a.e. in $((0,T)\times B)\backslash Q_{T}.$}
\end{cases}\nonumber
\end{equation}
\item[{\bf 2.}] An artificial pressure is introduced 
$$\sigma_{\delta}(P,Q,D)  = \sigma(P,Q,D) + \delta (P^{\beta} + Q^{\beta} + D^{\beta}). $$
where $\delta>0$ and $\beta \geq 2$.

\item[{\bf 3.}] 
We modify the initial data for $P$, $Q$, $D$, $C$ and $\vr \vc{v}$ so that the following set of relations hold
\begin{equation}
\left.
\begin{array}{r}
P_{0}= P_{0,\delta, \omega,\e}= P_{0,\delta,\omega}=P_{0,\delta}, \ Q_{0}=Q_{0,\delta, \omega,\e} = Q_{0,\delta,\omega}=Q_{0,\delta} \\ \\
D_{0}=D_{0,\delta, \omega,\e, \delta}=D_{0, \delta,\omega}= D_{0,\delta}\ C_{0}=C_{0,\delta, \omega,\e}=C_{0,\delta,\omega}=C_{0,\delta}\\ \\
 P_{0,\delta,\omega,\e}\geq 0, \quad  Q_{0,\delta, \omega,\e}\geq 0\quad  D_{0,\delta,\omega,\e}\geq 0\quad  C_{0,\delta,\omega,\e }\geq 0\\ \\
 P_{0,\delta,\omega,\e}\not \equiv 0, \ Q_{0,\delta,\omega,\e}\not \equiv  0,  \ D_{0,\delta,\omega,\e}\not \equiv  0,\ C_{0,\delta,\omega,\e}\not\equiv 0\\ \\
 P_{0,\delta,\omega,\e},\ Q_{0,\delta,\omega,\e},\  D_{0,\delta, \omega,\e},\ C_{0,\delta,\omega,\e}|_{\R^{3}\backslash\Omega_{0}}=0,\\ \\
 \displaystyle{\int_{B}(P_{0,\delta, \omega,\e}^{m}+\delta P_{0,\delta, \omega,\e})dx\leq c,\   \int_{B}(Q_{0,\delta, \omega,\e}^{m}+\delta Q_{0,\delta, \omega,\e}^{\delta})dx}\leq c\\ \\
 \displaystyle{\int_{B}(D_{0,\delta, \omega,\e}^{m}+\delta D_{0,\delta, \omega,\e})dx\leq c}\\ \\
 (\vr \vc{v})_0 =  (\vr \vc{v})_{0,\delta,\omega,\e}=(\vr \vc{v})_{0,\delta,\omega}=(\vr \vc{v})_{0,\delta}=0 \,\, a.a.\, on \,\,  \{ \vr_0=0\},\\
  \displaystyle{\int_{\Omega_0} \frac{1}{\vr_0} |(\vr \vc{v})_0|^2 dx < \infty.}
 \end{array}
\right \}
\label{IP}
\end{equation}
 \item[{\bf 4.}] Keeping $\eta, \varepsilon, \delta, \omega >0$ fixed, we solve the modified problem in the fixed reference domain $B \subset  \R^3$  chosen  as in \eqref{RefDom} with
$\bar{\Omega}_{\tau} \subset B, \, \tau \ge 0.$  The approach used at this level employs  the Faedo-Galerkin method, which  involves  replacing the regularized Forchheimer's equation 
by a system  of integral equations, with $P, Q, D$ 
 being exact solutions of the {\em regularized} \eqref{dP}, \eqref{dQ} and 
\eqref{dD} (involving the parameter $\eta$ mentioned above which appears as an  artificial viscosity).  Given $\eta, \e, \de, \om$ positive fixed, these parabolic equations can be solved with the aid of a suitable fixed point argument providing the approximate cell densities. Next, using the integral form of the regularized Forchheimer's equation and performing a fixed point argument one obtains the  approximate velocity. By taking the limit as the dimension of the basis used in the Faedo-Galerkin approximation  tends to $\infty$ we obtain  the solution $(P_{\de,\om, \e, \eta}, Q_{\de,\om,\e,\eta}, D_{\de,\om, \e, \eta}, \vc{v}_{\de,\om, \e,\eta})$ within  the fixed reference domain $B.$ Next, we let $\eta \to 0$ following the line of arguments presented in \cite{Donatelli-Trivisa-2006, Feireisl-book} establishing the existence of the solution $(P_{\de,\om,\e}, Q_{\de,\om,\e}, D_{\de,\om,\e}, \vc{v}_{\de,\om,\e})$ within $B.$

\item[{\bf 5.}] 
Letting $\e \to 0$ we obtain a ``two-phase'' system, where the density vanishes in the healthy tissue of the reference domain.
Next, we perform the limit $\omega \to 0,$ where the extra stresses disappear  in the limit system. The desired conclusion follows from the final limit process $\delta \to 0.$ 
\end{enumerate} 

The weak formulation of the penalized problem reads:
%

\begin{itemize}
\item The integral relations \eqref{w-Da} in Definition \eqref{D2.1} hold true
for any $\tau \in [0,T]$ and $\vc{x} \in B$ and any test function $\varphi \in C_c^{\infty}([0,T] \times \mathbb{R}^3),$ and for 
$(\vc{G_{P_{\delta,\omega, \e}}}, \vc{G_{Q_{\delta,\omega, \e}}}, \vc{G_{D_{\delta,\omega, \e}}})$ given in \eqref{G}, namely
\begin{equation} 
\left.
\begin{array}{l}
 \hspace{0.5cm}\displaystyle{\int_{B}  P_{\delta,\omega,\e} \varphi(\tau,\cdot) \, dx - \int_{\Omega_0}  P_0 \varphi(0,\cdot)dx=}\\
 \hspace{0.6cm}\displaystyle{
 \int_0^{\tau} \!\!\int_{B} \left( P_{\delta,\omega,\e} \partial_t \varphi + P_{\delta,\omega,\e} \vc{v}_{\delta,\omega, \e} \cdot \Grad_x \varphi + \vc{G_{P_{\delta,\omega, \e}}}  \varphi(t, \cdot) \right) dx dt}, \\ \\
 \hspace{0.5cm}\displaystyle{ \int_{B}  Q_{\delta,\omega, \e} \varphi(\tau,\cdot) \, dx - \int_{\Omega_0}  Q_0 \varphi(0,\cdot)dx} =\\
 \hspace{0.6cm}\displaystyle{\int_0^{\tau} \!\!\int_{B} \left( Q_{\delta,\omega, \e} \partial_t \varphi + P_{\delta,\omega, \e} \vc{v}_{\delta,\omega, \e} \cdot \Grad_x \varphi + \vc{G_{Q_{\delta,\omega, \e}}} \varphi(t, \cdot) \right) dx dt},\\ \\
 \hspace{0.5cm} \displaystyle{ \int_{B} D_{\delta,\omega, \e} \varphi(\tau,\cdot) \, dx   - \int_{\Omega_0}  D_0 \varphi(0,\cdot)dx}= \\
 \hspace{0.6cm}\displaystyle{\int_0^{\tau} \!\!\int_{B} \left( D_{\delta,\omega, \e} \partial_t \varphi + D_{\delta,\omega, \e} \vc{v}_{\delta,\omega, \e} \cdot \Grad_x \varphi + \vc{G_{D_{\delta,\omega, \e}}}  \varphi(t, \cdot) \right) dx dt }.
\end{array}
\right\}
 \label{w-D}
\end{equation}

\item The weak formulation for the penalized Forchheimer's equation reads
\begin{equation} 
\begin{split}
 \int_{B} \vr\vc{v}\cdot\vc{\varphi}(\tau,\cdot)&\dx -\int_{B}(\vr\vc{v})_0 \cdot \vc{\varphi} (0, \cdot)dx\\
=\int_{0}^{\tau}\!\!\int_{B}\Big(\!\vr_{\delta,\omega,\e}\vc{v}_{\delta,\omega,\e}\cdot\partial_{t}\vc{\varphi} +\vr& \vc{v}_{\omega, \e, \delta} \otimes \vc{v}_{\omega, \e, \delta} : \Grad_x \vc{\varphi} + \sigma_{\delta,\omega, \e}\Div \vc{\varphi} \!\Big) dxdt\\
+\int_0^\tau \!\!\int_{B} \Big(\mu_{\omega}\Grad_x &\vc{v}_{\de,\omega, \e}:\Grad_x \vc{\varphi}  - \frac{{\mu}_{\omega}} {K} \vc{v}_{\de,\omega, \e} \vc{\varphi}\Big)dxdt\\
+ \frac{1}{\varepsilon} \int_{\Gamma_t} ((\vc{V} &- \vc{v}_{\delta,\omega, \e}) \cdot \vc{n} \vc{\varphi} \cdot \vc{n}) dS_x =0
\end{split}
 \label{w-pressure2}
\end{equation}
for any test function $\vc{\varphi} \in C_c^{\infty}(B; \mathbb{R}^3),$
where $\vc{v}_{\omega, \e} \in W_0^{1,2}(B; \mathbb{R}^3),$
and  $\vc{v}_{\omega, \e}$ satisfies the no-slip boundary condition 
\begin{equation}
\vc{v}_{\omega, \e}|_{\partial B} = 0\,\, \mbox{in the sense of traces}. \label{no-slip}
\end{equation} 
and $ \sigma_{\delta,\omega, \e}= \sigma_{\delta}(P_{\delta,\omega, \e}, Q_{\delta,\omega, \e}, D_{\delta,\omega, \e})$, 
\item The weak formulation for $C_{\de,\omega,\e}$ is as follows,
\begin{equation*}
\int_{B}  C_{\de,\omega, \e} \varphi(\tau,\cdot) \, dx - \int_{\Omega_{0}}  C_0 \varphi(0,\cdot)dx 
\end{equation*}
\begin{equation}
 = \int_0^{\tau}\!\! \int_{B}  \left( C_{\de,\omega, \e} \partial_t \varphi  -  \nu_{\omega} \Grad_x C_{\de,\omega, \e}\cdot \Grad_x \varphi  -  C_{\de,\omega, \e}  \varphi(\tau, \cdot) \right)dx dt,
\label{w-p-C}
\end{equation}
for any test function $\varphi \in C_c^{\infty}([0,T] \times \mathbb{R}^3)$ and $C_{\de,\omega, \e}$ satisfies the boundary conditions 
\begin{equation}
 C_{\de,\omega, \e}|_{\partial B} = 0\,\, \mbox{in the sense of traces}. \label{neumann}
\end{equation} 
\end{itemize}

Here, $\varepsilon$ and $\omega$ are positive parameters. 

\section{Uniform bounds}\label{S4}

The existence of global-in-time solutions $(P_{\delta, \omega, \e}, Q_{\delta, \omega, \e}, D_{\delta, \omega, \e}, \vc{v}_{\delta, \omega,\e}, C_{\delta, \omega, \e}) $ for  the penalized problem can be proved for fixed $\omega, \e, \delta$ with the method described in the Section 3.
In this section we collect all the uniform bounds satisfied by the solutions $(P_{\delta, \omega, \e}, Q_{\delta, \omega, \e}, D_{\delta, \omega, \e}, \vc{v}_{\delta, \omega, \e}, C_{\delta, \omega, \e}) $. We start by the nutrient equation. By applying standard theory for parabolic equations (see \cite{AS67}) we  obtain the following bounds for the nutrient 
$C_{\delta, \omega,\e}$ 
 \begin{equation}
\frac{\partial}{\partial t}\int_{B}\frac{1}{2}C^{2}_{\de,\omega, \e}dx +\int_{B}(C^{2}_{\de,\omega, \e}+\nu_{\omega}|\Grad_{x}C_{\de, \omega, \e}|^{2})dx=0. 
\label{mpC1}
\end{equation}
\begin{equation}
\|C_{\delta,\omega, \e}\|_{L^{\infty}([0,T]\times B)}\leq \max\{\|C_{0}\|_{L^{\infty}}, \bar{C}\}.
\label{mpC}
\end{equation}
Moreover, the constructed  solutions  $P_{\delta, \omega, \e}, Q_{\delta, \omega, \e}, D_{\delta, \omega, \e}$ satisfy the following {\em energy inequality}
\begin{equation}
\frac{\partial}{\partial t} \int_B \frac{1}{m-1} (P^m_{\delta, \omega,\e} + Q^m_{\delta, \omega,\e} + D^m_{\delta, \omega,\e})dx
\label{energy}
\end{equation}
\[+ \frac{\partial}{\partial t}\int_{B}\frac{\delta}{\beta-1} (P^\beta_{\delta, \omega,\e} + Q^\beta_{\delta,\omega,\e} + D^\beta_{\delta, \omega,\e}) dx +\frac{\partial}{\partial t} \int_B\frac{1}{2}\vr_{\delta, \omega, \e}|\vc{v}_{\delta, \omega, \e}|^{2}dx\]

\[
+\int_B \frac{m}{m-1}\left[ (K_Q + K_{A})\bar{C} P^m_{\delta, \omega,\e} +  K_D \bar{C}Q^m_{\delta, \omega,\e} + K_D D^m_{\delta, \omega,\e}\right] dx \]
\[
+\int_B \frac{\delta\beta}{\beta-1} \left[ (K_Q + K_{A})\bar{C} P^\beta_{\delta, \omega,\e} +  K_D \bar{C}Q^\beta_{\delta, \omega,\e} + K_D D^\beta_{\delta, \omega,\e}\right] dx \]
\[+ \int_B \mu_{\omega} |\Grad_x \vc{v}_{\delta, \omega,\e}|^2  + \frac{\mu_{\omega}}{K}|\vc{v}_{\delta, \omega, \e}|^{2}dx
+
\frac{1}{\varepsilon} \int_{\Gamma_t} [(\vc{v}_{\delta, \omega,\e}-\vc{V}) \cdot \vc{n}] \vc{v}_{\delta,\omega,\e} \cdot \vc{n} dS \leq\]
\[
 \int_B  \frac{m}{m-1}\left[(K_B+K_Q+K_A) C_{\delta, \omega,\e}P^m_{\delta, \omega,\e}+(K_{D}-K_{P})C_{\delta, \omega,\e}Q^m_{\delta, \omega,\e}\right]dx\]
\[
 +\int_B \frac{\delta\beta}{\beta-1} \left[(K_B+K_Q+K_A) C_{\delta, \omega,\e}P^\beta_{\delta, \omega,\e}+(K_{D}-K_{P})C_{\delta, \omega,\e}Q^\beta_{\delta, \omega,\e}\right]dx\]
\[+\frac{1}{2}\int_{B}\left((K_Q+K_A) \bar{C}-(K_B+K_Q+K_A) C_{\delta, \omega,\e}\right)P_{\delta, \omega,\e}|\vc{v}_{\delta, \omega, \e}|^{2}dx\]
\[+\frac{1}{2}\int_{B}\left(K_D \bar{C}-(K_D-K_P) C_{\delta, \omega,\e}\right)Q_{\delta, \omega,\e}|\vc{v}_{\delta, \omega, \e}|^{2}dx+\frac{1}{2}\int_{B}K_RD_{\delta, \omega,\e}|\vc{v}_{\delta, \omega, \e}|^{2}dx.\]

Since the vector field $\vc{V}$ vanishes on the boundary of the reference domain $B$ it may be used as a text function in the weak formulation of the momentum equation  for the  penalized ForchheimerÕs equation  \eqref{w-pressure2}, namely

\begin{equation}
\int_B\vr_{\delta,\omega, \e} \vc{v}_{\delta,\omega,\e} \cdot \vc{V} (\tau, \cdot) dx - \int_B (\vr\vc{v})_0 \cdot \vc{V}(0, \cdot) dx
\label{weak=V}
\end{equation} 
\begin{equation*}
= \int_0^{\tau} \!\!\int_B \left(\vr_{\delta,\omega, \e} \ \vc{v}_{\omega,\e} \partial_t \vc{V}+\vr_{\delta,\omega, \e}\vc{v}_{\delta, \omega,\e} \otimes \vc{v}_{\delta, \omega,\e}: \Grad_x \vc{V}\right)dxdt
\end{equation*}
\[
 +\int_0^{\tau}\!\! \int_B \left(\sigma_{\delta,\omega,\e}\Div_x \vc{V}
- \mu_{\omega} \Grad_x \vc{v}_{\delta, \omega,\e}  : \Grad_x \vc{V} - \frac{\mu_{\omega}}{K} \vc{v}_{\delta,\omega,\e} \vc{V}   \right) dx dt\]
 \[
+\frac{1}{\varepsilon}\int_0^{\tau}\!\! \int_{\Gamma_t} ((\vc{V} - \vc{v}_{\delta,\omega,\e}) \cdot \vc{n} \vc{V} \cdot \vc{n} ) dS_x dt.
\]
Combining together \eqref{energy} with \eqref{weak=V} and by using \eqref{mpC} we get  the following {\em modified energy inequality},
\begin{equation}
\int_B \frac{1}{m-1} (P^m_{\delta, \omega,\e} + Q^m_{\delta, \omega,\e} + D^m_{\delta, \omega,\e})+ \frac{\delta}{\beta-1} (P^\beta_{\delta, \omega,\e} + Q^\beta_{\delta,\omega,\e} + D^\beta_{\delta, \omega,\e}) dx
\label{mod-energy}
\end{equation}
\[+\int_B\frac{1}{2}\vr_{\delta, \omega, \e}|\vc{v}_{\delta, \omega, \e}|^{2}dx\]
\[+\int_{0}^{\tau}\!\!\int_B \frac{m}{m-1}\left( K_{1} P^m_{\delta, \omega,\e} +  K_{2}Q^m_{\delta, \omega,\e} +K_{3} D^m_{\delta, \omega,\e}\right )dxdt \]
\[+\int_{0}^{\tau}\!\!\int_B \frac{\delta\beta}{\beta-1} \left(K_{4} P^\beta_{\delta, \omega,\e} +  K_{5}Q^\beta_{\delta, \omega,\e} +K_{6} D^\beta_{\delta, \omega,\e}\right)dxdt \]
\[+ \int_B \left(\mu_{\omega} |\Grad_x \vc{v}_{\delta, \omega,\e}|^2  + \frac{\mu_{\omega}}{K}|\vc{v}_{\delta, \omega, \e}|^{2}\right)dx
+\frac{1}{\varepsilon} \int_{\Gamma_t} |(\vc{v}_{\delta, \omega,\e}-\vc{V})\cdot\vc{n}|^{2}dSdt \leq\]
 \[\int_B \frac{1}{m-1} (P^m_{0} + Q^m_{0} + D^m_{0}) dx +\int_B \frac{\delta}{\beta-1} (P^\delta_{0} + Q^\delta_{0} + D^\delta_{0}) dx\]
\[+\int_B\frac{1}{2}\vr_{0}|\vc{v}_{0}|^{2}dx+\int_B\rho_{\delta,\omega, \e} \vc{v}_{\delta,\omega,\e} \cdot \vc{V} (\tau, \cdot) dx - \int_B (\vr\vc{v})_0 \cdot \vc{V}(0, \cdot) dx\] 
 \[+\int_0^{\tau} \!\!\int_B \left(\rho_{\delta,\omega, \e} \ \vc{v}_{\omega,\e} \partial_t \vc{V}+\rho_{\delta,\omega, \e}  [\vc{v}_{\delta, \omega,\e} \otimes \vc{v}_{\delta, \omega,\e}] : \Grad_x \vc{V} + \sigma_{\delta,\omega,\e}\Div_x \vc{V}\right) dxdt\]
 \[- \int_0^{\tau}\!\! \int_B\left (\mu_{\omega} \Grad_x \vc{v}_{\delta, \omega,\e}  : \Grad_x \vc{V} +\frac{\mu_{\omega}}{K} \vc{v}_{\delta,\omega,\e} \vc{V}   \right) dx dt \]
\[
 \int_{0}^{\tau}\!\!\int_B \Big( \frac{m}{m-1}\left(K_{7}P^m_{\delta, \omega,\e}+K_{8}Q^m_{\delta, \omega,\e}\right)
 + \frac{\delta\beta}{\beta-1} (K_9 P^\beta_{\delta, \omega,\e}+K_{10}Q^\beta_{\delta, \omega,\e})\Big)C_{\delta, \omega,\e}dxdt\]
\[+\frac{1}{2}\int_{0}^{\tau}\!\!\int_{B}\left((K_{11}P_{\delta, \omega,\e}+K_{12}Q_{\delta, \omega,\e})C_{\delta, \omega,\e}|\vc{v}_{\delta, \omega, \e}|^{2}
+K_{13}D_{\delta, \omega,\e}|\vc{v}_{\delta, \omega, \e}|^{2}\right)dxdt,\]
where $K_{i}$ are constants depending on $\bar{C}$, $K_{A}, K_{D}, K_{P}, K_{Q}, K_{R}$.

Since the vector field $\vc{V}$ is regular by applying the maximum principle \eqref{mpC} to $C_{\omega, \e}$ and  by means of Gronwall inequalities and \eqref{mpC1},  \eqref{mod-energy} we get the following uniform bounds  with respect to $\delta$, $\varepsilon$, $\omega$.
\begin{equation}
\begin{split}
&\|P_{\delta, \omega,\e}\|_{L^{\infty}(0,T;L^{m}(B))}+\|P_{\delta, \omega,\e}\|_{L^{m}(0,T;L^{m}(B))}\leq c,\\
&\delta\|P_{\delta, \omega,\e}\|_{L^{\infty}(0,T;L^{\beta}(B))}+\delta\|P_{\delta, \omega,\e}\|_{L^{\beta}(0,T;L^{\beta}(B))}\leq c,
\end{split}
\label{bP}
\end{equation}

\begin{equation}
\begin{split}
&\|Q_{\delta, \omega,\e}\|_{L^{\infty}(0,T;L^{m}(B))}+\|Q_{\delta,\omega,\e}\|_{L^{m}(0,T;L^{m}(B))}\leq c,\\
&\delta\|Q_{\delta, \omega,\e}\|_{L^{\infty}(0,T;L^{\beta}(B))}+\delta\|Q_{\delta, \omega,\e}\|_{L^{\beta}(0,T;L^{\beta}(B))}\leq c,
\end{split}
\label{bQ}
\end{equation}

\begin{equation}
\begin{split}
&\|D_{\delta, \omega,\e}\|_{L^{\infty}(0,T;L^{m}(B))}+\|D_{\omega,\e}\|_{L^{m}(0,T;L^{m}(B))}\leq c,\\
&\delta\|D_{\delta, \omega,\e}\|_{L^{\infty}(0,T;L^{\beta}(B))}+\delta\|D_{\delta, \omega,\e}\|_{L^{\beta}(0,T;L^{\beta}(B))}\leq c,
\end{split}
\label{bD}
\end{equation}

\begin{equation}
\begin{split}
\|\sqrt{P_{\delta, \omega,\e}}\vc{v}_{\delta,\omega,\e}\|_{L^{\infty}(0,T;L^{2}(B))}+\|&\sqrt{Q_{\delta, \omega,\e}}\vc{v}_{\delta,\omega,\e}\|_{L^{\infty}(0,T;L^{2}(B))}\\
&+\|\sqrt{D_{\delta,\omega,\e}}\vc{v}_{\delta,\omega,\e}\|_{L^{\infty}(0,T;L^{2}(B))}\leq c,
\end{split}
\label{bPQDv}
\end{equation}

\begin{equation}
\|\mu_{\omega} \vc{v}_{\delta, \omega,\e}\|_{L^{2}(0,T;L^{2}(B))}+ \|\mu_{\omega}\Grad\vc{v}_{\delta, \omega,\e}\|_{L^{2}(0,T;L^{2}(B))}\leq c
\label{bv}
\end{equation}

\begin{equation}
\|C_{\delta, \omega,\e}\|_{L^{2}(0,T;L^{2}(B))}+\|\nu_{\omega}\Grad C_{\delta, \omega,\e}\|_{L^{2}(0,T;L^{2}(B))}\leq c,
\label{bC}
\end{equation}

\begin{equation}
\int_{0}^{\tau}\!\!\int_{\Gamma_t} |(\vc{v}_{\delta, \omega,\e}-\vc{V}) \cdot \vc{n}|^{2}dSdt\leq c\varepsilon,
\label{bp}
\end{equation}

where $c$ depends only on the initial data.

\section{Pressure Estimates}\label{S5}
The a priori bounds should be at least so strong for all the expressions appearing in the weak formulation to make sense. As a matter of fact, slightly more is needed, namely the equi-integrability property in order to perform the limits with respect to the weak topology of the Lebesgue space $L^{1}$. It is evident that we can not control the pressure in the set $((0,T) \times B)\setminus \bar{Q}_t,$ where Forchheimer's  equation  contains a singular term. Nevertheless, local pressure estimates can be obtained in that region following the approach introduced by Lions \cite{Lions-1998} for the mathematical treatment of the Navier-Stokes equations for isentropic compressible fluids.  Here we only present a flavor of this  method which involves the use of test functions of the form 
$$ \varphi(t,x) =  \Grad_x \Delta_x^{-1}\left[{{\bf 1}}_B (P^{\nu}_{\de,\om, \e}+Q^{\nu}_{\de,\om, \e}
+D^{\nu}_{\de,\om, \e})\right],$$
in the weak formulation of the momentum equation. Here  $\nu>0$ is a small positive number, 
and the symbol $\Delta_x$ denotes the Laplace operator considered on the whole domain $\R^3$.

It needs to be emphasized that the estimates presented below are obtained on a  compact set ${\mathcal K}$ specially designed such that the property \eqref{K} below holds true. This property is crucial in dealing with the moving domain since it guarantees that the compact set ${\mathcal K}$ has no intersection with the boundary $\Gamma_{\tau}$ for all times  $\tau.$ Without this delicate choice of ${\mathcal K}$ the treatment of the singular term in  \eqref{w-pressure2} would be problematic having only the estimate \eqref{bp} in our disposal.

Since $m > \frac{3}{2}$ the estimates obtained earlier  will assist us in obtaining the bound$$
\int_0^T\!\! \int_{\mathcal K}   \left( P^{m+\nu}_{\de, \omega,\e} + Q^{m+\nu}_{\de, \omega, \e} + D^{m+\nu}_{\de, \omega,\e} \right)  + \delta \left(P^{\beta+\nu}_{\de, \omega,\e} + Q^{\beta+\nu}_{\de,\omega,\e} + D^{\beta +\nu }_{\de, \omega, \e} \right) ~dxdt \le c(T) $$
for any compact $\mathcal{K} \subset [0,T]\times\overline{B}$, such that
 \begin{equation}\label{K}
 \displaystyle{\mathcal{K}\cap\Big(\bigcup_{\tau\in[0,T]}(\{\tau\}\times \Gamma_{\tau})\Big)=\emptyset}.
 \end{equation}
Indeed writing  
\begin{equation*}
    \begin{split}
  &  \int_0^T\!\!\int_{\mathcal{K}} (P_{\de,\omega,\e}^{m+\nu}+ Q_{\de,\omega,\e}^{m+\nu}+ D_{\de,\omega,\e}^{m+\nu})~dxdt=\\
   &  - \int_{\mathcal{K}} (\vr\vc{v})_0 \vc{\varphi}~dx-\int_0^T\!\!\int_{\mathcal{K}}  (\vr_{\de,\omega,\e} \vc{v}_{\de,\e,\omega}) \partial_t \vc{\varphi}+ \vr_{\de,\omega,\e} \vc{v}_{\de,\omega,\e} \otimes \vc{v}_{\de,\omega,\e} :\Grad \vc{\varphi}~dxdt \\
  & + \int_0^T\!\!\int_{\mathcal{K}} \mu \Grad \vc{v}_{\de,\omega,\e} \Grad \vc{\varphi}~dxdt    + \frac{\mu}{K} \int_0^T\!\! \int_{\mathcal{K}}  \Div \vc{v}_{\de,\omega,\e}\vc{\varphi}~dxdt= \\
 &- \int_0^T\!\! \int_{\mathcal{K}} P_{\de,\omega,\e}^m (Q_{\de,\omega,\e}^{\nu}+ D_{\de,\omega,\e}^{\nu}) +\delta P_{\de,\omega,\e}^\beta (Q_{\de,\omega,\e}^{\nu}+ D_{\de,\omega,\e}^{\nu}) ~dxdt\\
 &- \int_0^T\!\! \int_{\mathcal{K}}  Q_{\de,\omega,\e}^{m} (P_{\de,\omega,\e}^{\nu}+ D_{\de,\omega,\e}^{\nu}) +\delta Q_{\de,\omega,\e}^{\beta} (P_{\de,\omega,\e}^{\nu}+ D_{\de,\omega,\e}^{\nu})~dxdt\\
 &- \int_0^T\!\! \int_{\mathcal{K}} D_{\de,\omega,\e}^{m} (P_{\de,\omega,\e}^{\nu}+ Q_{\de,\omega,\e}^{\nu}) +\delta D_{\de,\omega,\e}^{\beta} (P_{\de,\omega,\e}^{\nu}+ Q_{\de,\omega,\e}^{\nu})    ~dxdt.
    \end{split}
\end{equation*}
Taking into consideration 
$$\partial_t (P^{\nu}_{\de,\omega,\e} + Q^{\nu}_{\de,\omega,\e} + D^{\nu}_{\de,\omega,\e})+ \Div ((P^{\nu}_{\de,\omega,\e} + Q^{\nu}_{\de,\omega,\e} + D^{\nu}_{\de,\omega,\e})  \vc{v}_{\de,\omega,\e}) $$
$$+ (\nu -1)(P^{\nu}_{\de,\omega,\e} + Q^{\nu}_{\de,\omega,\e} + D^{\nu}_{\de,\omega,\e})\Div \vc{v} = \vc{G}$$
 where  $\vc{G}$ is a function of $P^{\nu}_{\de,\omega,\e}, Q^{\nu}_{\de,\omega,\e}, D^{\nu}_{\de,\omega,\e}, C_{\de,\omega,\e}$  and thanks to the uniform bounds of the previous section $\vc{G} \in L^{\infty}([0,T]; L^{p}(\mathcal{K}))$, for  $p>1$, so we get that 
 $$ \partial_t \varphi\quad \text{is bounded in $L^p(0,T;L^q(K))$ for appropriate $1\le p, q  \le \infty$}.$$
The remaining terms can be controlled  by following standard arguments (see \cite{CKT} Section 3.5.2).
The pressure estimates can be extended ``up to the boundary'' provided we are able to construct
suitable test functions in the momentum equation. More precisely, we need  
$\varphi = \varphi(t,x)$ such that 
\begin{itemize}
\item $\partial_t \varphi, \Grad_x \varphi$ belong to $L^q(Q_t)$ for a given (large) $q \ll 1;$ 
\item $\varphi (t, \cdot) \in W_0^{1,q}(\Omega_t; \R^3)$ for any $\tau \in (0,T);$
\item $\varphi (T;\cdot)  = 0;$
\item  $\Div_x \varphi(t,x) \to \infty$  for  $x \to \partial \Omega_t$ uniformly for  $t$  in  compact  subsets of $(0, T ).$
\end{itemize}
For the construction of such $\varphi$ we refer the reader to Feireisl \cite{FeireislNS-2011}.

\section{Vanishing penalization $\varepsilon \to 0$}\label{S6}
In this section we start performing  the limits of our three level approximation. 
The first step is to keep $\delta$ and $\omega$ fixed and to perform the penalization limit $\e\to 0$.
The main issues of this process will be to recover the strong convergence of $P_{\delta,\omega,\e}$,  $Q_{\delta,\omega,\e}$, $D_{\delta,\omega,\e}$  and to get rid of the quantities that are supported by the  healthy tissue $B\backslash \Omega_{t}$.

As a consequence of the uniform bounds \eqref{bP}-\eqref{bp} we get that the weak solutions of our approximating system satisfy
\begin{equation}
\left.
\begin{array}{r}
P_{\delta, \omega,\e}\longrightarrow P_{\delta,\omega}\\ \\
Q_{\delta,\omega,\e}\longrightarrow Q_{\delta,\omega}\\ \\
D_{\delta,\omega,\e}\longrightarrow D_{\delta,\omega}\\ \\
C_{\delta,\omega,\e}\longrightarrow C_{\delta,\omega}
\end{array}
\right\}\quad \text{in} \quad C_{\text{weak}}(0,T;L^{m}(B))
\label{cPQDC}
\end{equation}
From the bounds \eqref{bv} and \eqref{bC}  we get
\begin{equation}
\vc{v}_{\delta,\omega,\e}\rightarrow \vc{v}_{\delta,\omega} \quad  \text{weakly in $L^{2}(0,T;W^{1,2}_{0}(B))$} 
\label{cv}
\end{equation}
\begin{equation}
C_{\delta,\omega,\e}\rightarrow C_{\delta,\omega}
\quad  \text{weakly in $L^{2}(0,T;W^{1,2}_{0}(B))$} 
\label{cC1}
\end{equation}
while from \eqref{bp} we have that
\begin{equation}
(\vc{v}_{\delta,\omega, \e}-\vc{V}) \cdot \vc{n}(\tau, \cdot)\big|_{\Gamma_{\tau}}=0\quad \text{for a.a $\tau\in [0,T]$.}\nonumber
\label{cp}
\end{equation}
By combining together \eqref{bP}, \eqref{bQ}, \eqref{bD}, \eqref{bPQDv}, \eqref{bv} and the compact embedding of $L^{m}(B)$ in $W^{-1,2}(B)$we get 
\begin{equation}
\hspace{-0.55cm}
\left.
\begin{array}{r}
P_{\delta,\omega,\e}\vc{v}_{\delta,\omega,\e}\rightarrow P_{\delta,\omega}\vc{v}_{\delta,\omega}\\ \\
Q_{\delta,\omega,\e}\vc{v}_{\delta,\omega,\e}\rightarrow Q_{\delta,\omega}\vc{v}_{\delta,\omega}\\ \\
D_{\delta,\omega,\e}\vc{v}_{\delta,\omega,\e}\rightarrow D_{\delta,\omega}\vc{v}_{\delta,\omega}
\end{array}
\right\}\  \text{weakly-($\ast$) in} \ L^{\infty}(0,T;L^{2m/m+2}(B)).
\label{cPQDCv}
\end{equation}
Finally from the equations \eqref{dP}-\eqref{dD} it follows that
\begin{equation}
\hspace{-0.55cm}
\left.
\begin{array}{r}
P_{\delta,\omega,\e}\vc{v}_{\delta,\omega,\e}\rightarrow P_{\delta,\omega}\vc{v}_{\delta,\omega}\\ \\
Q_{\delta,\omega,\e}\vc{v}_{\delta,\omega,\e}\rightarrow Q_{\delta,\omega}\vc{v}_{\delta,\omega}\\ \\
D_{\delta,\omega,\e}\vc{v}_{\delta,\omega,\e}\rightarrow D_{\delta,\omega}\vc{v}_{\delta,\omega}
\end{array}
\right\}\quad\text{in} \ C_{\text{weak}}([T_{1},T_{2}];L^{2m/m+2}(B)),
\label{cPQDCv2}
\end{equation}
Since the embedding of $W^{1,2}_{0}(B)$ in $L^{6}(B)$ is compact we have that
\begin{equation*}
\hspace{-0.55cm}
\left.
\begin{array}{r}
P_{\delta,\omega,\e}\vc{v}_{\omega,\e}\otimes \vc{v}_{\omega,\e}\rightarrow \overline{P_{\delta,\omega}\vc{v}_{\delta,\omega}\otimes\vc{v}_{\delta\omega}}\\ \\
Q_{\delta,\omega,\e}\vc{v}_{\omega,\e}\otimes \vc{v}_{\omega,\e}\rightarrow \overline{Q_{\delta,\omega}\vc{v}_{\delta,\omega}\otimes\vc{v}_{\delta\omega}}\\ \\
D_{\delta,\omega,\e}\vc{v}_{\omega,\e}\otimes \vc{v}_{\omega,\e}\rightarrow \overline{D_{\delta,\omega}\vc{v}_{\delta,\omega}\otimes\vc{v}_{\delta\omega}}
\end{array}
\right\}\quad \text{weakly in $L^{2}(0,T;L^{6m/4m+3}(B))$,}
\end{equation*}
where the bar denotes the weak limit of the nonlinear functions. As in \eqref{cPQDCv2} we can conclude that 
\begin{equation}
\hspace{-0.55cm}
\left.
\begin{array}{r}
 \overline{P_{\delta,\omega}\vc{v}_{\delta,\omega}\otimes\vc{v}_{\delta\omega}}=P_{\delta,\omega}\vc{v}_{\delta,\omega}\otimes\vc{v}_{\delta\omega}\\ \\
 \overline{Q_{\delta,\omega}\vc{v}_{\delta,\omega}\otimes\vc{v}_{\delta\omega}}=Q_{\delta,\omega}\vc{v}_{\delta,\omega}\otimes\vc{v}_{\delta\omega}\\ \\
\overline{D_{\delta,\omega}\vc{v}_{\delta,\omega}\otimes\vc{v}_{\delta\omega}}=D_{\delta,\omega}\vc{v}_{\delta,\omega}\otimes\vc{v}_{\delta\omega}
\end{array}
\right\}\quad \text{a.e. in  $(0,T)\times B$.}
\label{conv}
\end{equation}

Taking into account \eqref{bP}-\eqref{bD}, \eqref{bC} and, as before,  the compact embedding of $L^{m}(B)$ in $W^{-1,2}(B)$ we get
\begin{equation}
\hspace{-0.55cm}
\left.
\begin{array}{r}
P_{\delta, \omega,\e}C_{\delta,\omega,\e}\longrightarrow P_{\delta,\omega}C_{\delta,\omega}\\ \\
Q_{\delta,\omega,\e}C_{\delta,\omega,\e}\longrightarrow Q_{\delta,\omega}C_{\delta,\omega}\\ \\
D_{\delta,\omega,\e}C_{\delta,\omega,\e}\longrightarrow D_{\delta,\omega}C_{\delta,\omega}
\end{array}
\right\}\ \text{weakly-($\ast$) in $L^{\infty}(0,T;L^{2m/m+2}(B))$.}
\label{cC}
\end{equation}

By using  \eqref{cPQDC}, \eqref{cv}, \eqref{cC1}, \eqref{cPQDCv}, \eqref{cPQDCv2} and \eqref{cC} we can pass to the limit in the weak formulations \eqref{w-D} and \eqref{w-p-C} and we obtain 
\begin{equation}
\left.
\begin{array}{l}
\displaystyle{\int_{B}   P_{\delta,\omega} \varphi(\tau,\cdot)dx  - \int_{B}  P_0 \varphi(0,\cdot)  dx =}\\
\hspace{1.5cm}\displaystyle{ \int_0^{\tau}\! \!\int_{B} \left( P_{\delta,\omega} \partial_t \varphi + P_{\de,\omega} \vc{v} \cdot \Grad_x \varphi + \vc{G_{P_{\delta,\omega}}}  \varphi(t, \cdot) \right) dx dt,} \\ \\
\displaystyle{\int_{B}  Q_{\delta,\omega} \varphi(\tau,\cdot)dx -  \int_B Q_0 \varphi(0,\cdot) dx =}\\ 
\hspace{1.5cm}\displaystyle{  \int_0^{\tau}\! \!\int_{B} \left( Q_{\delta,\omega} \partial_t \varphi +Q_{\delta,\omega}\vc{v} \cdot \Grad_x \varphi + \vc{G_{Q_{\delta,\omega}}}  \varphi(t, \cdot) \right) dx dt,}\\ \\
\displaystyle{\int_{B} D_{\delta,\omega} \varphi(\tau,\cdot) dx -  \int_B D_0 \varphi(0,\cdot) dx=} \\
\hspace{1.5cm}\displaystyle{\int_0^{\tau}\!\! \int_{B} \left( D_{\delta,\omega} \partial_t \varphi + D_{\delta,\omega} \vc{v} \cdot \Grad_x \varphi + \vc{G_{D_{\delta,\omega}}}  \varphi(t, \cdot) \right) dx dt, }\\ \\
\displaystyle{ \int_{B} C_{\delta,\omega} \varphi(\tau,\cdot) dx - \int_B  C_0 \varphi(0,\cdot) dx=}\\
\vspace{0.1in}
\hspace{1.5cm}\displaystyle{\int_0^{\tau}\!\! \int_{B} \left(C_{\delta,\omega} \partial_t \varphi - \nu_{\omega}\Grad_{x} C_{\delta,\omega} \cdot \Grad_x \varphi - C_{\delta,\omega} \varphi(t, \cdot) \right) dx dt. }
\end{array}
\right\}
 \label{p-D}
\end{equation}

Passing into the limit in the weak formulation \eqref{w-pressure2} of the Forchenheimer's  equation  we get

\begin{equation} \nonumber
\int_{B}\vr_{\delta,\omega}\vc{v}_{\delta,\omega}\vc{\varphi}(\tau,\cdot)dx- \int_{B}{ \vc{m}_0 \cdot \vc{\varphi} (0, \cdot)\,\dx}
\end{equation}
\begin{equation*}
= \int_0^{\tau}\!\! \int_{B} { \Big( \vr_{\delta,\omega} \vc{v}_{\delta,\omega} \cdot
\partial_t \vc{\varphi} + \vr \vc{v}_{\delta, \omega,} \otimes \vc{v}_{ \delta, \omega} : \Grad_x \vc{\varphi} + \overline{\sigma_{\delta} (P_{\delta, \omega},Q_{\delta, \omega},D_{\delta, \omega})} \ \Div \vc{\varphi} \Big)}dxdt 
\end{equation*}
\begin{equation}
 -\int_0^\infty \int_{B} \left(\mu_{\omega}
 \Grad_x \vc{v}_{\delta, \omega}:\Grad_x \vc{\varphi} + \frac{{\mu}_{\omega}} {K} \vc{v}_{\delta,\omega} \vc{\varphi}\right)\,  \dxdt 
\label{w-pressure22}
\end{equation}
for any test function $\vc{\varphi} \in C_c^{\infty}(B; \mathbb{R}^3).$

\subsection{Strong convergence of the densities}
As we can see in \eqref{w-pressure22} the convergence properties obtained so far are not enough in order to pass into the limit in the pressure term.  Therefore, we need to establish the strong convergence of the density of the proliferating, quiescent and dead cells. The main steps of our approach are summarized  below.

\begin{itemize}

\item First we establish that the {\em effective viscous pressures} 
\[P^m_{\de,\om,\e}+\delta P^\beta_{\de,\om,\e} -2 \mu_{\om} \Div_x \vc{v}_{\de,\om,\e}\]
\[Q^m_{\de,\om,\e}+\delta Q^\beta_{\de,\om,\e} -2 \mu_{\om} \Div_x \vc{v}_{\de,\om,\e}\]
\[D^m_{\de,\om,\e}+\delta D^\beta_{\de,\om,\e} -2 \mu_{\om} \Div_x \vc{v}_{\de,\om,\e}\]
are weakly continuous.
\item Next we obtain a control on the amplitude of oscillations or of the following  {\em oscillations defect measure}, showing that 
$$
\sup_{k\geq 0}\left(\limsup_{\e \to 0}\|T_k(Z_{\de,\om,\e}) - T_k(Z_{\de,\om}) \|_{L^{m +1}((0,T) \times B)} \right)\le c,
$$
with $T_k(\cdot)$ defined in \eqref{cut} and $Z_{\de,\om,\e}$ stands for $P_{\de,\om,\e}$, $Q_{\de,\om,\e}$,  $D_{\de,\om,\e}$.
\item Finally we show the decay of the {\em  defect measure}:
$$\int_{B}(\overline{Z_{\de,\om,\e}\log Z_{\de,\om}}-Z_{\de,\om}\log Z_{\de,\om})(t)dx.$$

\end{itemize}

\subsubsection{Preliminary material}
 Now we  establish some preliminary properties of the equations satisfied by $P_{\delta, \omega}$,  $Q_{\delta, \omega}$,  $D_{\delta, \omega}$ that will be useful in the sequel. First we define a family of cut-off functions,
\begin{equation}\label{cut}
T_{k}(z)=kT\left(\frac{z}{k}\right), \quad \text {for $z\in \R$, $k=1,2,\ldots$,}
\end{equation}
where $T\in C^{\infty}(\R)$, 
$$T(z)=z \ \text{for $|z|\leq 1$,}\ T(z)=2\ \text{for $z\geq 3$,}$$
$T$ is concave on $[0,\infty)$ and $T(-z)=-T(z)$.
In order to simplify the notations we will rewrite the terms  ${\bf \{G_P, G_Q\}}$ in \eqref{G} as follows
\begin{equation}
\begin{split} \label{GG}
& \vc{G_P} =  F(C) P \\
& \vc{G_Q} =   F(C) Q\\
\end{split}
\end{equation}
where $F$ denotes a linear function of $C$.
First, we consider the balance equation satisfied by $P_{\delta,\omega,\e}$, it is straightforward to prove that the following relation holds in $\mathcal{D}'((0,T);\R^{3})$
\begin{equation}
\begin{split}
\partial_{t} T_{k}(P_{\delta,\omega,\e}) &+\Div(T_{k}(P_{\delta,\omega,\e}) \vc{v}_{\delta,\omega,\e})+(T'_{k}(P_{\delta,\omega,\e})P_{\delta,\omega,\e}-T_{k}(P_{\delta,\omega,\e}))\Div\vc{v}_{\delta,\omega,\e}\\&=T'_{k}(P_{\delta,\omega,\e})P_{\delta,\omega,\e}F(C_{\delta,\omega,\e})
\end{split}
\label{TPe}
\end{equation}
If we take into account \eqref{cPQDC}-\eqref{cC} and take the limit $\e\to 0$ we have
\begin{equation}
\begin{split}
\partial_{t} \overline{T_{k}(P_{\delta,\omega})} &+\Div(\overline{T_{k}(P_{\delta,\omega})}\vc{v}_{\delta,\omega})+\overline{(T'_{k}(P_{\delta,\omega})P_{\delta,\omega}-T_{k}(P_{\delta,\omega}))\Div\vc{v}_{\delta,\omega}}\\&=\overline{T'_{k}(P_{\delta,\omega})P_{\delta,\omega}}F(C_{\delta,\omega}),
\end{split}
\label{TP}
\end{equation}
where 
$$
\left.
\begin{array}{l}
T_{k}(P_{\delta,\omega,\e})\rightarrow\overline{T_{k}(P_{\delta,\omega})}\\
T'_{k}(P_{\delta,\omega,\e})P_{\delta,\omega,\e}\rightarrow \overline{T'_{k}(P_{\delta,\omega})P_{\delta,\omega}}
\end{array}
\right\} \ \text{in $C(0,T;L^{p}_{weak}(B))$, for all $1\leq p<\infty$}$$
and 
$$T'_{k}(P_{\delta,\omega,\e})P_{\delta,\omega,\e}-T_{k}(P_{\delta,\omega,\e}))\Div\vc{v}_{\delta,\omega,\e}\rightarrow\overline{(T'_{k}(P_{\delta,\omega})P_{\delta,\omega}-T_{k}(P_{\delta,\omega}))\Div\vc{v}_{\delta,\omega}},$$
weakly in $L^{2}((0,T)\times B)$.

In a similar way we have the following relations hold in $\mathcal{D}'((0,T);\R^{3})$
\begin{equation}
\begin{split}
\partial_{t} T_{k}(Q_{\delta,\omega,\e}) &+\Div(T_{k}(Q_{\delta,\omega,\e}) \vc{v}_{\delta,\omega,\e})+(T'_{k}(Q_{\delta,\omega,\e})Q_{\delta,\omega,\e}\!\!-\!\!T_{k}(Q_{\delta,\omega,\e}))\Div\vc{v}_{\delta,\omega,\e}\\&=T'_{k}(q_{\delta,\omega,\e})Q_{\delta,\omega,\e}F(C_{\delta,\omega,\e}),
\end{split}
\label{TQe}
\end{equation}

\begin{equation}
\begin{split}
\partial_{t} \overline{T_{k}(Q_{\delta,\omega})} &+\Div(\overline{T_{k}(Q_{\delta,\omega})}\vc{v}_{\delta,\omega})+\overline{(T'_{k}(Q_{\delta,\omega})Q_{\delta,\omega}-T_{k}(P_{\delta,\omega}))\Div\vc{v}_{\delta,\omega}}\\&=\overline{T'_{k}(Q_{\delta,\omega})Q_{\delta,\omega}}F(C_{\delta,\omega}),
\end{split}
\label{TQ}
\end{equation}

\begin{equation}
\begin{split}
\partial_{t} T_{k}(D_{\delta,\omega,\e}) &+\Div(T_{k}(D_{\delta,\omega,\e}) \vc{v}_{\delta,\omega,\e})+(T'_{k}(D_{\delta,\omega,\e})D_{\delta,\omega,\e}-T_{k}(D_{\delta,\omega,\e}))\Div\vc{v}_{\delta,\omega,\e}\\&=-K_{R}T'_{k}(D_{\delta,\omega,\e})D_{\delta,\omega,\e},
\end{split}
\label{TDe}
\end{equation}

\begin{equation}
\begin{split}
\partial_{t} \overline{T_{k}(D_{\delta,\omega})} &+\Div(\overline{T_{k}(D_{\delta,\omega})}\vc{v}_{\delta,\omega})+\overline{(T'_{k}(Q_{\delta,\omega})Q_{\delta,\omega}-T_{k}(P_{\delta,\omega}))\Div\vc{v}_{\delta,\omega}}\\&=-K_{R}\overline{T'_{k}(D_{\delta,\omega})D_{\delta,\omega}}.
\end{split}
\label{TD}
\end{equation}

\subsubsection{The weak continuity of the effective viscous pressure}
In this section we show that the quantities
\[P^m_{\de,\om,\e} +\delta P^\beta_{\de,\om,\e}  -2 \mu_{\om} \Div_x \vc{v}_{\de,\om,\e} \]
\[Q^m_{\de,\om,\e} +\delta Q^\beta_{\de,\om,\e} -2 \mu_{\om} \Div_x \vc{v}_{\de,\om,\e} \]
\[D^m_{\de,\om,\e} +\delta D^\beta_{\de,\om,\e} -2 \mu_{\om} \Div_x \vc{v}_{\de,\om,\e} \]
known as ``effective viscous pressure" exhibit  certain ``weak continuity" as it is established by the following proposition.
\begin{proposition}
\label{P1.1}
Under the hypothesis of Theorem \ref{T2.2}, we have
\begin{equation}
\begin{split}
\lim_{\e\to 0} \int_0^T \!\!\int_{\mathcal{K}} \psi \phi \left(P_{\de,\om,\e}^m +\delta P_{\de,\om,\e}^\delta - 2\mu_{\om} \Div_x \vc{v}_{\de,\om,\e}\right) T_k(P_{\de,\om,\e})dxdt\\
=\int_0^T \!\!\int_{\mathcal{K}} \psi \phi \left(\overline{P_{\de,\om}^m +\delta P_{\de,\om}^\delta}-2\mu_{\om} \Div_x \vc{v}_{\de,\om}\right) \overline{T_k(P_{\de,\om})}dxdt.
\end{split}
\label{evpP}
\end{equation}
\begin{equation}
\begin{split}
\lim_{\e\to 0} \int_0^T \!\!\int_{\mathcal{K}} \psi \phi \left(Q_{\de,\om,\e}^m +\delta Q_{\de,\om,\e}^\delta - 2\mu_{\om} \Div_x \vc{v}_{\de,\om,\e}\right) T_k(Q_{\de,\om,\e})dxdt\\
=\int_0^T \!\!\int_{\mathcal{K}} \psi \phi \left(\overline{Q_{\de,\om}^m +\delta Q_{\de,\om}^\delta}-2\mu_{\om} \Div_x \vc{v}_{\de,\om}\right) \overline{T_k(Q_{\de,\om})}dxdt.
\end{split}
\label{evpQ}
\end{equation}
\begin{equation}
\begin{split}
\lim_{\e\to 0} \int_0^T \!\!\int_{\mathcal{K}} \psi \phi \left(D_{\de,\om,\e}^m +\delta D_{\de,\om,\e}^\delta - 2\mu_{\om} \Div_x \vc{v}_{\de,\om,\e}\right) T_k(D_{\de,\om,\e})dxdt\\
=\int_0^T \!\!\int_{\mathcal{K}} \psi \phi \left(\overline{D_{\de,\om}^m +\delta D_{\de,\om}^\delta}-2\mu_{\om} \Div_x \vc{v}_{\de,\om}\right) \overline{T_k(D_{\de,\om})}dxdt.
\end{split}
\label{evpD}
\end{equation}
for any $\psi \in {\mathcal D}(0,T), \phi \in {\mathcal D}(\mathcal{K})$ and for any compact $\mathcal{K} \subset [0,T]\times\overline{B}$, such that $\displaystyle{\mathcal{K}\cap\Big(\bigcup_{\tau\in[0,T]}(\{\tau\}\times \Gamma_{\tau})\Big)=\emptyset}$.
\end{proposition}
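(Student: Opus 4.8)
The plan is to adapt the classical Lions--Feireisl argument for weak continuity of the effective viscous flux to the present setting, working locally in the compact set $\mathcal{K}$ which, by the property $\mathcal{K}\cap\big(\bigcup_{\tau}(\{\tau\}\times\Gamma_\tau)\big)=\emptyset$, stays away from the moving boundary; on $\mathcal{K}$ the viscosity coefficient $\mu_\om$ is smooth and strictly positive, so the singular penalization term in \eqref{w-pressure2} plays no role here. First I would introduce the operators $\mathcal{A}_i = \Grad_x\Delta_x^{-1}$ and use, as test function in the weak momentum formulation \eqref{w-pressure2}, the quantity
\[
\vc{\varphi}(t,x) = \psi(t)\phi(x)\,\mathcal{A}\big[\mathbf{1}_{\mathcal K}\,T_k(P_{\de,\om,\e})\big],
\]
with $\psi\in\mathcal{D}(0,T)$, $\phi\in\mathcal{D}(\mathcal{K})$, and the analogous choice with $\overline{T_k(P_{\de,\om})}$ in the limit equation \eqref{w-pressure22}. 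The identity \eqref{TPe} governing $T_k(P_{\de,\om,\e})$ (and its limit \eqref{TP}) supplies the needed information on $\partial_t T_k(P_{\de,\om,\e})$, so that $\partial_t\vc{\varphi}$ is controlled via the continuity equation and the commutator/compactness estimates for $\mathcal{A}$.

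The core of the argument is then to subtract the two resulting identities and pass $\e\to 0$ term by term, using the convergences \eqref{cPQDC}--\eqref{cC} collected in Section~\ref{S6}: the densities and $C_{\de,\om,\e}$ converge in $C_{\text{weak}}(0,T;L^m(B))$, the velocities converge weakly in $L^2(0,T;W^{1,2}_0(B))$, the momenta and the products $P_{\de,\om,\e}\vc{v}_{\de,\om,\e}$ etc.\ converge in $C_{\text{weak}}$ in the appropriate Lebesgue spaces, and the quadratic terms $P_{\de,\om,\e}\vc{v}_{\de,\om,\e}\otimes\vc{v}_{\de,\om,\e}$ are handled via the compact embedding $W^{1,2}_0(B)\hookrightarrow L^6(B)$ as in \eqref{conv}. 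The key algebraic cancellation — exactly as in the compressible Navier--Stokes theory — is that the only term which does not obviously pass to the limit, namely the one pairing $\sigma_\delta(P_{\de,\om,\e},Q_{\de,\om,\e},D_{\de,\om,\e})$ with $\Div\vc{\varphi} = \psi\phi\,T_k(P_{\de,\om,\e}) + (\text{l.o.t.})$, is matched against the convective term $\vr\vc{v}\otimes\vc{v}:\Grad\vc{\varphi}$; after invoking the Div--Curl lemma (or Feireisl's commutator lemma for Riesz operators) one obtains precisely the difference of the two sides of \eqref{evpP}. The "lower order" contributions — those involving $\mu_\om\Grad_x\vc{v}_{\de,\om,\e}:\Grad_x\vc{\varphi}$, the friction term $\tfrac{\mu_\om}{K}\vc{v}_{\de,\om,\e}\vc{\varphi}$, the source $F(C_{\de,\om,\e})P_{\de,\om,\e}$, and the cut-off corrector $(T'_k(P_{\de,\om,\e})P_{\de,\om,\e}-T_k(P_{\de,\om,\e}))\Div\vc{v}_{\de,\om,\e}$ — all pass to the limit directly since one factor converges strongly (by compactness of $L^m(B)\hookrightarrow W^{-1,2}(B)$ applied to $T_k$ of the densities) while the other converges weakly. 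The identical reasoning with $P$ replaced by $Q$, and by $D$ with $F(C)$ replaced by $-K_R$, yields \eqref{evpQ} and \eqref{evpD}.

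Two technical points deserve care. First, because the test function $\vc{\varphi}$ must be admissible in \eqref{w-pressure2}, one should verify $\vc{\varphi}\in W^{1,2}_0(B;\R^3)$ and that $\partial_t\vc{\varphi}, \Grad_x\vc{\varphi}$ lie in the Lebesgue spaces dual to those in which the corresponding factors of the momentum equation live; here the bound $m>\tfrac{3}{2}$ together with the $\delta$-regularization $\beta\ge 2$ and the local higher integrability of the pressure established in Section~\ref{S5} (valid precisely on compact sets $\mathcal{K}$ disjoint from $\bigcup_\tau\Gamma_\tau$) is what makes every product integrable — this is the reason the statement is localized to such $\mathcal{K}$. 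Second, the operator $\mathcal{A}=\Grad_x\Delta_x^{-1}$ acting on $\mathbf{1}_{\mathcal K}T_k(\cdot)$ must be understood on all of $\R^3$ as in Section~\ref{S5}, and one needs the standard commutator estimate $\big\|\,[\mathcal{A}_i R_j - R_j\mathcal{A}_i]\,g\,\big\|$-type bounds (Feireisl) to handle the term $\vr\vc{v}\otimes\vc{v}:\Grad\mathcal{A}[\cdot]$.

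\medskip\noindent\textbf{Main obstacle.} I expect the principal difficulty to be the rigorous justification that $\partial_t\vc{\varphi}$ is an admissible multiplier and that the resulting boundary-in-time and convective terms converge: one must combine the transport equation \eqref{TPe} for $T_k(P_{\de,\om,\e})$, the continuity of $\mathcal{A}$ on negative Sobolev spaces, and the $C_{\text{weak}}$-in-time convergences, being careful that the cut-off corrector term $(T'_k(P_{\de,\om,\e})P_{\de,\om,\e}-T_k(P_{\de,\om,\e}))\Div\vc{v}_{\de,\om,\e}$ — which is only bounded in $L^2((0,T)\times B)$ and converges merely weakly — does not spoil the limit; its contribution survives in \eqref{evpP} only through the explicitly written weak limit $\overline{T_k(P_{\de,\om})}$, and keeping precise track of which products are "weak $\times$ strong" versus genuinely "weak $\times$ weak" (the latter being admissible only through the Div--Curl structure) is the delicate bookkeeping at the heart of the proof.
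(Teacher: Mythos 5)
Your proposal follows essentially the same route as the paper: test the penalized momentum equation \eqref{w-pressure2} and its limit \eqref{pressure3} against $\psi\phi\,\mathcal{A}_i[T_k(P_{\de,\om,\e})]$ (respectively $\psi\phi\,\mathcal{A}_i[\overline{T_k(P_{\de,\om})}]$), use the renormalized transport identities \eqref{TPe}, \eqref{TP} to control the time derivatives, pass to the limit term by term via the convergences \eqref{cPQDC}--\eqref{conv}, and isolate the commutator $\int\psi\,\vc{v}^i\{T_k\,\mathcal{R}_{i,j}[\phi\vr\vc{v}^j]-\phi\vr\vc{v}^j\mathcal{R}_{i,j}[T_k]\}$ as the only genuinely ``weak $\times$ weak'' product, handled by Feireisl's commutator/Div--Curl lemma. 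Your identification of the role of the localization to $\mathcal{K}$ (strict positivity of $\mu_\om$, avoidance of the penalty term, validity of the local pressure estimates from Section~\ref{S5}) and of the delicate bookkeeping on the cut-off corrector term all match the paper's argument, so this is essentially the paper's proof.
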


\begin{proof}
Consider the operators
$${\mathcal A}_j[\vc{v}] = \Delta^{-1}\partial_{x_j}[\vc{v}], \,\, j=1,2,3$$
specifically
$${\mathcal A}_j[\vc{v}] = {\mathcal F}^{-1} \left\{ \frac{-i \xi_j}{|\xi|^2} {\mathcal F}[v](\xi) \right\}, \,\, j=1,2,3,$$
where ${\mathcal F}$ denotes the Fourier transform. These operators are endowed with some nice properties, namely
$$
\begin{cases}
& \|\partial_{x_i} {\mathcal A}_j[\vc{v}]\|_{W^{1,s}(B)} \le c(s,B) \|\vc{v}\|_{L^s(\R^3)}, \,\, 1 <s<\infty\\
& \|{\mathcal A}_i[\vc{v}]\|_{L^q(B)} \le c(q,s,B) \|{\vc{v}}\|_{L^s(\R^3)}, \,\, q \,\,  \mbox{finite},\frac{1}{q}\ge \frac{1}{s} -\frac{1}{3},\\
&\|{\mathcal A}_i[\vc{v}]\|_{L^{\infty}(B)} \le c(s, B) \|\vc{v}\|_{L^s(\R^3)} \,\, \mbox{if} \,\,  s>3.
\end{cases}
$$
Now, we use the quantities
$$\varphi_i(t,x) = \psi(t) \phi(x) {\mathcal A}_i[T_k(P_{\de,\om,\e})], \,\, \psi\in {\mathcal D}(0,T),\,\, \phi \in {\mathcal D}(\mathcal{K}), \,\, i=1,2,3.$$
 as test functions in the weak formulation \eqref{w-pressure2} of the penalized Forchheimer equation. After some analysis and by using the relation \eqref{TPe} we get
\begin{equation}
\int_0^T \!\!\int_{\mathcal{K}} \psi \phi 
 \left(P_{\de,\om,\e}^m + \de P_{\de,\om,\e}^\beta  -  2\mu_{\omega} \Div_x \vc{v}_{\de,\om,\e}\right) T_k(P_{\de,\om,\e})~dxdt =
 \label{wc1}
 \end{equation}
 \[\int_0^T\!\! \int_{\mathcal{K}} \psi \phi 
 \left(Q_{\de,\om,\e}^m +D_{\de,\om,\e}^m+ \de(Q_{\de,\om,\e}^\beta +D_{\de,\om,\e}^\beta) \right) T_k(P_{\de,\om,\e})~dxdt\]
 \[- \int_0^T\!\! \int_{\mathcal{K}} \psi \left[ P_{\de,\om,\e}^m +Q_{\de,\om}^m +D_{\de,\om,\e}^m + \de(P_{\de,\om,\e}^\beta +Q_{\de,\om,\e}^\beta +D_{\de,\om,\e}^\beta) \right] \partial_{x_i} \phi {\mathcal A}_i[T_k(P_{\de, \om,\e})]~dxdt\]
 \[+ \mu_{\omega} \int_0^T \!\!\int_{\mathcal{K}}\psi  \partial_{x_j}\phi \partial_{x_j}\vc{v}^i_{\de,\om,\e} {\mathcal A}_i[T_k(P_{\de,\om,\e})] ~dxdt\]
 \[- \mu_{\om} \int_0^T\!\! \int_{\mathcal{K}}\psi \left\{\vc{v}_{\de,\om,\e}^i\partial_{x_j}\phi \partial_{x_j} {\mathcal A}_i[T_k(P_{\de,\om,\e})] + \vc{v}_{\de,\om,\e}^i \partial_{x_i} \phi T_k(P_{\de,\om,\e})\right\}~dxdt\]
 \[
 -\frac{\mu_{\om}}{K} \int_0^T\!\! \int_{\mathcal{K}}   \vc{v}_{\de,\om,\e} \psi(t) \phi(x) {\mathcal A}[T_k(P_{\de,\om})] ~dxdt -\int_0^T\!\! \int_{\mathcal{K}}\phi  \vr_{\de,\om,\e}  \vc{v}_{\de,\om,\e}^i \partial_t \psi {\mathcal A}_i[T_k(P_{\de,\om,\e})] ~ dxdt
 \]
 \[ - \int_0^T\!\! \int_{\mathcal{K}}\phi  \vr_{\de,\om}  \vc{v}_{\de,\om}^i  \psi {\mathcal A}_i [(T_k(P_{\de,\om,\e}) - T'_k(P_{\de,\om,\e})P_{\de,\om,\e}) \Div_x \vc{v}_{\de,\om,\e}] ~dxdt\]
\[ + \int_0^T \!\!\int_{\mathcal{K}}\phi  \vr_{\de,\om}  \vc{v}_{\de,\om}^i \psi  T'_k(P_{\de,\om,\e})P_{\de,\om,\e}F(C_{\de,\om,\e})  ~dxdt\]
\[
 -\int_0^T \!\!\int_{\mathcal{K}}\psi \vr_{\de,\om,\e} \vc{v}_{\de,\om,\e}^i    \vc{v}_{\de,\om,\e}^j \partial_{x_j} \phi {\mathcal A}_i[T_k(P_{\de,\om,\e})]~dxdt
 \]
\[
+ \int_0^T \!\!\int_{\mathcal{K}}\psi  \vc{v}_{\de,\om,\e}^i   \left\{T_k(P_{\de,\om,\e}) {\mathcal R}_{i,j}[\phi \vr_{\de,\om,\e}\vc{v}_{\de,\om,\e}^j ] - \phi \vr_{\de,\om,\e} \vc{v}^j_{\de, \om,\e} {\mathcal R}_{i,j}[T_k(P_{\de,\om,\e})]\right\}~dxdt
 \]
where the operators ${\mathcal R}_{i,j}$ are defined as  $\partial_{x_{j}}{\mathcal A}_{i}[\vc{v}] $.

Analogously, we can repeat the above argument considering the equation \eqref{TP} and the following one,
\begin{equation}
\begin{split}
\partial_t (\vr \vc{v}_{\de,\om}) &+ \Div (\vr \vc{v}_{\de,\om} \otimes \vc{v}_{\de,\om}) + \overline{\Grad(P_{\de,\om}^{m}+\delta P_{\de,\om}^{\beta})}\\
&+ \overline{\Grad(Q_{\de,\om}^{m}+D_{\de,\om}^{m}+\delta (Q_{\de,\om}P_{\de,\om}))}= \mu_{\om} \Delta \vc{v}_{\de,\om}
 -\frac{\mu_{\om}}{K} \vc{v}_{\de,\om}, \label{pressure3}
\end{split}
\end{equation}
and considering the test functions
$$\varphi_i(t,x) = \psi(t) \phi(x) {\mathcal A}_i[\overline{T_k(P_{\de,\om})}], \,\, \psi\in {\mathcal D}(0,T),\,\, \phi \in {\mathcal D}(\mathcal{K}), \,\, i=1,2,3,$$
to deduce
 \[\int_0^T\!\! \int_{\mathcal{K}} \psi \phi 
 \left\{\overline{P^m_{\de,\om} + \de P^\beta_{\de,\om}}  -  \mu_{\om} \Div_x \vc{v}_{\de,\om} \right\} \overline{T_k(P_{\de,\om})} ~dxdt =\]
  \[\int_0^T\!\! \int_{\mathcal{K}} \psi \phi 
 \left\{\overline{Q^m_{\de,\om} +D^m_{\de,\om} + \de(Q^\beta_{\de,\om} +D^\beta_{\de,\om})}  -  \mu_{\om} \Div_x \vc{v}_{\de,\om} \right\} \overline{T_k[P_{\de,\om}]} ~dxdt \]
  \[- \int_0^T \!\!\int_{\mathcal{K}}  \psi \overline{P^m_{\de,\om} +Q^m_{\de,\om} +D^m_{\de,\om} + \de(P^\beta_{\de,\om} +Q^\beta_{\de,\om} +D^\beta_{\de,\om})}\partial_{x_i} \phi {\mathcal A}_i[\overline{T_k(P_{\de,\om})}]~dxdt\]
 \[-\frac{\mu_{\om}}{K} \int_0^T\!\! \int_{\mathcal{K}}  \vc{v} \psi(t) \phi(x){\mathcal A}_i[\overline{T_k(P_{\de,\om})}] ~dxdt+ \mu_{\om} \int_0^T \!\!\int_{\mathcal{K}}\psi \partial_{x_j}\phi \partial_{x_j}\vc{v}^i_{\de,\om} {\mathcal A}_i[\overline{T_k(P_{\de,\om})}] ~dxdt\]
 \[- \mu_{\om} \int_0^T\!\! \int_{\mathcal{K}}\psi \left\{ \vc{v}^i_{\de,\om}\partial_{x_j}\phi \partial_{x_j} {\mathcal A}_i[\overline{T_k(P_{\de,\om})}] + \vc{v}^i_{\de,\om} \partial_{x_i} \phi \overline{T_k(P_{\de,\om})}\right\}~dxdt
\]
 \[
 -\int_0^T\!\! \int_{\mathcal{K}}\phi \vr_{\de,\om}  \vc{v}^i_{\de,\om}  \partial_t \psi {\mathcal A}_i[\overline{T_k(\vr)}] ~ dxdt
 \]
 \[
 - \int_0^T\!\! \int_{\mathcal{K}}\phi  \vr_{\de,\om}  \vc{v}^i  \psi {\mathcal A}_i [\overline{(T'_{k}(P_{\delta,\omega})P_{\delta,\omega}-T_{k}(P_{\delta,\omega}))\Div\vc{v}_{\delta,\omega}}]~dxdt \]
 \[+ \int_0^T\!\! \int_{\mathcal{K}}\phi\vr_{\de,\om}  \vc{v}^i_{\om,\de} \psi   {\mathcal A}_i[\overline{T'_{k}(P_{\delta,\omega})P_{\delta,\omega}}F(C_{\delta,\omega})]~dxdt
\]
\[
 -\int_0^T\!\! \int_{\mathcal{K}}\psi \overline{\vr}_{\de,\om} \vc{v}^i_{\de,\om}    \vc{v}^j_{\de,\om} \partial_{x_j} \phi {\mathcal A}_i[\overline{T_k(P_{\de,\om})}]~dxdt
 \]
\begin{equation}
+ \int_0^T\!\! \int_{\mathcal{K}}\psi  \vc{v}^i _{\de,\om}  \left\{\overline{T_k(P_{\de,\om})} {\mathcal R}_{i,j}[\phi \vr \vc{v}^j _{\de,\om}] - \phi \vr\vc{v}^j {\mathcal R}_{i,j}[\overline{T_k(P_{\de,\om})}]\right\}~dxdt
\label{wc2}
 \end{equation}
 The following result
\[
 \int_0^T\!\! \int_{\mathcal{K}}\psi  \vc{v}_{\de,\om}^i   \left\{T_k(P_{\de,\om,\e}) {\mathcal R}_{i,j}[\phi \vr_{\de,\om,\e}\vc{v}_{\de,\om,\e}^j ] - \phi \vr_{\de,\om} \vc{v}^j_{\de, \om} {\mathcal R}_{i,j}[T_k(P_{\de,\om})]\right\}~dxdt 
\]
$$ \downarrow$$
\[
 \int_0^T\!\! \int_{\mathcal{K}}\psi  \vc{v}^i   \left\{\overline{T_k(P_{\de,\om})} {\mathcal R}_{i,j}[\phi \vr_{\de,\om} \vc{v}^j_{\de,\om} ] - \phi \vr_{\de,\om} \vc{v}^j_{\de,\om} {\mathcal R}_{i,j}[\overline{T_k(P_{\de,\om})}]\right\}~dxdt
\]
the proof of which follows the analysis presented in \cite{Feireisl-book} combined with the analysis performed in \eqref{cPQDC}-\eqref{conv} yields that all the terms on the right-hand side of \eqref{wc1} converge to their counterparts in \eqref{wc2}  ending up with \eqref{evpP}.
The proofs of \eqref{evpQ}, \eqref{evpD} follow in a similar way combining together \eqref{TQe}, \eqref{TQ}, \eqref{pressure3} and  \eqref{TDe}, \eqref{TD}, \eqref{pressure3} respectively.
\end{proof}

\subsubsection{The amplitude of oscillations}
The main result of this section follows the analysis in \cite{Feireisl-book}. Here we only give a flavor of the argument.
\begin{proposition}\label{P6.2}
There exists a constant $c$ independent of $k$  such that 
\begin{equation}
\limsup_{\e \to 0} \|T_k(P_{\de,\om,\e}) - T_k(P_{\de,\om}) \|_{L^{m +1}((0,T) \times B)} \le c.
\label{aoP}
\end{equation}
\begin{equation}
\limsup_{\e \to 0} \|T_k(Q_{\de,\om,\e}) - T_k(Q_{\de,\om}) \|_{L^{m +1}((0,T) \times B)} \le c.
\label{aoQ}
\end{equation}
\begin{equation}
\limsup_{\e \to 0} \|T_k(D_{\de,\om,\e}) - T_k(D_{\de,\om}) \|_{L^{m +1}((0,T) \times B)} \le c.
\label{aoD}
\end{equation}
for any $k \ge 1.$
\end{proposition}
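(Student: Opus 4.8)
The plan is to follow the classical Lions--Feireisl strategy for bounding the amplitude of density oscillations, specialized to the cut-off quantities $T_k$ and to the compact sets $\mathcal{K}$ dictated by \eqref{K}. I will carry out the argument for $P$; the cases of $Q$ and $D$ are analogous, using \eqref{TQe}--\eqref{TQ} and \eqref{TDe}--\eqref{TD} in place of \eqref{TPe}--\eqref{TP}. First I would record the elementary algebraic inequality: since $z \mapsto z^m$ is convex and $T_k$ is concave and nondecreasing on $[0,\infty)$, for every $\e$
\[
\left(P_{\de,\om,\e}^m - P_{\de,\om}^m\right)\left(T_k(P_{\de,\om,\e}) - T_k(P_{\de,\om})\right) \ge c\,\left|T_k(P_{\de,\om,\e}) - T_k(P_{\de,\om})\right|^{m+1}
\]
pointwise a.e., with $c>0$ independent of $k$ and $\e$. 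Passing to the weak limit and using the convexity/concavity bookkeeping (so that $\overline{P^m_{\de,\om}\,T_k(P_{\de,\om})} \ge \overline{P^m_{\de,\om}}\,\overline{T_k(P_{\de,\om})}$, and similarly for the $\delta P^\beta$ term), it follows that
\[
\limsup_{\e\to 0}\int_0^T\!\!\int_{\mathcal K} \psi\phi\,\left|T_k(P_{\de,\om,\e}) - T_k(P_{\de,\om})\right|^{m+1}dxdt
\le \frac{1}{c}\int_0^T\!\!\int_{\mathcal K}\psi\phi\Big(\overline{P^m_{\de,\om}\,T_k(P_{\de,\om})} - \overline{P^m_{\de,\om}}\,\overline{T_k(P_{\de,\om})}\Big)dxdt,
\]
for any cut-off $\psi\in\mathcal D(0,T)$, $\phi\in\mathcal D(\mathcal K)$ with $0\le\psi,\phi\le1$.

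Next I would invoke Proposition \ref{P1.1}: the weak continuity of the effective viscous pressure \eqref{evpP} lets me replace $\overline{P^m_{\de,\om}\,T_k(P_{\de,\om})} - \overline{P^m_{\de,\om}}\,\overline{T_k(P_{\de,\om})}$ (after restoring the $\delta$-term) by the \emph{viscous} defect
\[
2\mu_\om\Big(\overline{\Div_x\vc{v}_{\de,\om}\,T_k(P_{\de,\om})} - \Div_x\vc{v}_{\de,\om}\,\overline{T_k(P_{\de,\om})}\Big),
\]
which is controlled because $\mu_\om\,\Div_x\vc{v}_{\de,\om}\in L^2$ uniformly by \eqref{bv}, while $T_k(P_{\de,\om,\e})$ is bounded by $2k$ but — crucially — $\|T_k(P_{\de,\om,\e}) - T_k(P_{\de,\om})\|$ itself appears, so one estimates the viscous defect by $C\,\|\mu_\om\Div_x\vc v_{\de,\om}\|_{L^2}\,\limsup_\e\|T_k(P_{\de,\om,\e})-T_k(P_{\de,\om})\|_{L^2(\mathcal K)}$. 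Since $T_k$ is globally Lipschitz (with constant $1$) one has the interpolation-free bound $\|\cdot\|_{L^2(\mathcal K)}\le |\mathcal K|^{(m-1)/(2(m+1))}\|\cdot\|_{L^{m+1}(\mathcal K)}$, so writing $Y_k:=\limsup_\e\|T_k(P_{\de,\om,\e})-T_k(P_{\de,\om})\|_{L^{m+1}(\psi\phi\,dxdt)}$ one arrives at an inequality of the form $Y_k^{m+1}\le C\,Y_k$ with $C$ independent of $k$; hence $Y_k\le C^{1/m}$, uniformly in $k$. A standard exhaustion of $(0,T)\times B$ (modulo the time-dependent boundary strip, which is harmless since the bound \eqref{aoP} is stated in $L^{m+1}((0,T)\times B)$ and the densities are extended by zero outside the tumor, so only a fixed reference domain matters) by compact sets $\mathcal K$ satisfying \eqref{K}, together with monotone convergence in the cut-offs $\psi\phi\nearrow 1$, upgrades this to the global bound \eqref{aoP}.

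The main obstacle is the same one flagged in the paper's own discussion preceding \eqref{K}: the argument must be localized away from the moving interface $\Gamma_\tau$, because the penalized Forchheimer equation \eqref{w-pressure2} carries the singular boundary term $\frac1\e\int_{\Gamma_t}(\cdot)$, which is not uniformly bounded and would contaminate the commutator estimates if the test functions $\varphi_i=\psi\phi\,\mathcal A_i[T_k(\cdot)]$ had support meeting $\Gamma_\tau$. The choice $\phi\in\mathcal D(\mathcal K)$ with $\mathcal K$ obeying \eqref{K} is precisely what makes the boundary term vanish identically on the support of the test functions, so the pressure/commutator identities \eqref{wc1}--\eqref{wc2} established in Proposition \ref{P1.1} apply verbatim. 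The only other delicate point is the uniformity in $k$ of the constant $c$ in the algebraic inequality above; this is where one uses that $T_k$ is concave and $T_k'\le 1$, so that $(a^m-b^m)(T_k(a)-T_k(b))\ge (T_k(a)-T_k(b))^{m+1}$ up to a dimensional constant, with no $k$-dependence. Everything else — the weak-$\ast$ convergences \eqref{cPQDC}--\eqref{cC}, the renormalized equations \eqref{TPe}--\eqref{TD}, the compact embeddings — is already in hand from Sections \ref{S4}--\ref{S6}.
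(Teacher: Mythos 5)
Your proposal follows essentially the same route as the paper's own proof: localize to compacts $\mathcal K$ disjoint from the moving interface via \eqref{K} so the penalty term plays no role, use convexity of $z\mapsto z^m$ and concavity of $T_k$ to produce the pointwise lower bound by $|T_k(P_{\de,\om,\e})-T_k(P_{\de,\om})|^{m+1}$, dispose of the $\delta P^\beta$ contribution by monotonicity, invoke Proposition \ref{P1.1} to trade the pressure defect for the viscous defect, bound the latter by $\|\Div_x \vc{v}\|_{L^2}$ times the $L^2$-oscillation, and close the loop with a Young/Hölder argument yielding an inequality of the form $Y_k^{m+1}\le CY_k$ with $C$ independent of $k$ and of $\mathcal K$, before exhausting $(0,T)\times B$. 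This matches the paper's argument in Section \ref{S6}; you make explicit two steps that the paper leaves implicit (the interpolation from $L^2$ to $L^{m+1}$ and the resulting $Y_k^{m+1}\le CY_k$ closure), which is a welcome clarification rather than a different strategy.
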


\begin{proof}
We start by proving \eqref{aoP}. For any compact  $\mathcal{K} \subset [0,T]\times\overline{B}$, such that $\displaystyle{\mathcal{K}\cap\Big(\bigcup_{\tau\in[0,T]}(\{\tau\}\times \Gamma_{\tau})\Big)=\emptyset}$ we have 

$$\lim_{\e\to 0} \int_0^T \!\!\int_{\mathcal{K}} \Big(P_{\de,\om,\e}^m +\delta P_{\de,\om,\e}^\delta) T_k(P_{\de,\om,\e})-\overline{P_{\de,\om}^m +\delta P_{\de,\om}^\delta}~\overline{T_k(P_{\de,\om})}\Big)dxdt=$$
$$\lim_{\e\to 0} \int_0^T \!\!\int_{\mathcal{K}}   \Big(P_{\de,\om,\e}^m T_k(P_{\de,\om,\e})-\overline{P_{\de,\om}^m} ~\overline{T_k(P_{\de,\om})}\Big)dxdt$$
\begin{equation}
+\delta\lim_{\e\to 0} \int_0^T\!\!\int_{\mathcal{K}}  \Big( P_{\de,\om,\e}^\delta T_k(P_{\de,\om,\e})-\overline{P_{\de,\om}^\delta}~\overline{T_k(P_{\de,\om})}\Big)dxdt,
\label{aoP1}
\end{equation}
where, by using the convexity of the function $z \to z^{\gamma}$ is convex, and the fact that $T_k(z)$ is concave on $[0,\infty)$  we can prove
\begin{equation}
\begin{split}
\lim_{\e\to 0} \int_0^T\!\! \int_{\mathcal{K}}  P^m_{\de,\om,\e} T_k(P_{\de,\om,\e}) - \overline{P^m_{\de,\om}}\, \overline{T_k(P_{\de,\om})} ~dxdt\geq\\
\limsup_{\e\to 0} \int_0^T\!\! \int_{\mathcal{K}}  |T_k(P_{\de,\om}) - T_k(P)|^{m+1} ~dxdt.
\end{split}
\label{aoP2}
\end{equation}
Since $z \to z^{\beta}$ and  $T_k(z)$ are non decreasing  we have that,
\begin{equation}
\delta\lim_{\e\to 0} \int_0^T\!\!\int_{\mathcal{K}}  \Big( P_{\de,\om,\e}^\delta T_k(P_{\de,\om,\e})-\overline{P_{\de,\om}^\delta}~\overline{T_k(P_{\de,\om})}\Big)dxdt\geq 0.
\label{aoP3}
\end{equation} 
On the other hand,
$$\lim_{\e \to 0} \int_0^T\!\! \int_{\mathcal{K}}  \left( \Div_x \vc{v}_{\de,\om,\e} \, T_k(P_{\de,\om,\e}) - \Div_x \vc{v}_{\de,\om}\, \overline{T_k(P_{\de,\om})}\right) ~dxdt=$$
$$\lim_{\e\to 0} \int_0^T\!\! \int_{\mathcal{K}}  \left(T_k(P_{\de,\om,\e}) - T_k(P_{\de,\om}) + T_k(P_{\de,\om}) - \overline{T_k(P_{\de,\om})}\right) \Div_x \vc{v}_{\de,\om,\e} ~dxdt \le $$
 $$ 2 \sup_{\e}  \|\Div_x \vc{v}_{\de,\om,\e}\|_{L^2((0,T) \times {\mathcal{K}} )} \limsup_{\e \to 0}  \|T_k(P_{\de,\om,\e})- T_k(P_{\de,\e})\|_{L^{2}((0,T) \times {\mathcal{K}} )}.$$
By combing \eqref{aoP1}, \eqref{aoP2}, \eqref{aoP3}  together with \eqref{evpP} we have that
$$\limsup_{\e \to 0} \|T_k(P_{\de,\om,\e}) - T_k(P_{\de,\om}) \|_{L^{m +1}((0,T) \times \mathcal{K})} \le c.
$$
The result  \eqref{aoP} now follows since the constant $c$ is independent of $\mathcal{K}$.
The cell densities $Q$ and $D$ can be treated in a similar fashion yielding \eqref{aoQ} and \eqref{aoD}.

\end{proof}

\subsubsection{On the oscillations defect measure} 
In this section we will perform the final step of the proof of the strong convergence of our densities.
For simplicity we start with the density of the proliferating cells.
If we denote by $S_{\e}$ a regularizing operator and apply it to the equation \eqref{TP} we get 
\begin{equation}
\begin{split}
\partial_{t}S_{\e}[ \overline{T_{k}(P_{\delta,\omega})}] &+\Div(S_{\e}[\overline{T_{k}(P_{\delta,\omega})}]\vc{v}_{\delta,\omega})+S_{\e}[\overline{(T'_{k}(P_{\delta,\omega})P_{\delta,\omega}-T_{k}(P_{\delta,\omega}))\Div\vc{v}_{\delta,\omega}}]\\&=r_{\e}+S_{\e}[\overline{T'_{k}(P_{\delta,\omega})P_{\delta,\omega}}F(C_{\delta,\omega})],
\end{split}
\label{TPR1}
\end{equation}
where $r_{\e}\to 0$ in $L^{2}(0,T;L^{2}(\R^{3}))$ for any fixed $k$.
Multiplying \eqref{TPR1} by $b'(S_{\e}[ \overline{T_{k}(P_{\delta,\omega})}])$ and letting $\e\to 0$ we deduce
\begin{equation}
\begin{split}
\partial_{t}b(\overline{T_{k}(P_{\delta,\omega})})+\Div(b(\overline{T_{k}(P_{\delta,\omega})})\vc{v}_{\delta,\omega})\\
+\left(b'(\overline{T_{k}(P_{\delta,\omega})})\overline{T_{k}(P_{\delta,\omega})}-b(\overline{T_{k}(P_{\delta,\omega})})\right)\Div\vc{v}_{\delta,\omega}\\
=b'(\overline{T_{k}(P_{\delta,\omega})})[\overline{(T_{k}(P_{\delta,\omega})-T'_{k}(P_{\delta,\omega})P_{\de,\om})\Div\vc{v}_{\delta,\omega}}]\\
+b'(\overline{T_{k}(P_{\delta,\omega})})\overline{T'_{k}(P_{\delta,\omega})P_{\delta,\omega}}F(C_{\delta,\omega})].
\end{split}
\label{TPR2}
\end{equation}
Now we send $k\to\infty$ in \eqref{TPR2} and follow the same line of arguments as in \cite{Feireisl-book} and we end up with
\begin{equation}
\begin{split}
\partial_{t}b(P_{\delta,\omega})+\Div(b(P_{\delta,\omega})\vc{v}_{\de,\om})&+(b'(P_{\delta,\omega})P_{\delta,\omega}-b(P_{\delta,\omega}))\Div\vc{v}_{\de,\om}\\
&=b'(P_{\delta,\omega})P_{\delta,\omega}F(C_{\delta,\omega}).
\end{split}
\label{TPR3}
\end{equation}
Let us introduce a family of functions $L_k$ as,
\begin{equation}\label{cut2}
 L_k(z)= 
\begin{cases}
z \log(z) \,\, \mbox{for}\,\, 0\le z< k,\\
z \log(k) + z \int_k^z \frac{T_k(s)}{s^2} ds \,\,\mbox{for}\,\, z\ge k.
\end{cases}
\end{equation}
Seeing that $L_k$ can be written as
$$L_k(z) = \beta_k z + b_k(z)$$
where $|b_k(z)| \le c(k)$ and $b_k'(z)z - b_k(z) = T_k(z)$ for all $z>0,$ by considering \eqref{dP} we obtain \begin{equation}
\begin{split}
\partial_{t} L_{k}(P_{\delta,\omega,\e}) &+\Div(L_{k}(P_{\delta,\omega,\e}) \vc{v}_{\delta,\omega,\e})+T_{k}(P_{\delta,\omega,\e}) \Div\vc{v}_{\delta,\omega,\e}\\&=L'_{k}(P_{\delta,\omega,\e})P_{\delta,\omega,\e}F(C_{\delta,\omega,\e}).
\end{split}
\label{DM1}
\end{equation}
and by virtue of \eqref{TPR3}  we arrive at 
\begin{equation}
\hspace{-0,2cm}\partial_{t} L_{k}(P_{\de,\om}) +\Div(L_{k}(P_{\de,\om}) \vc{v})+T_{k}(P_{\de,\om}) \Div\vc{v}_{\de,\om}=L'_{k}(P_{\de,\om})P_{\de,\om}F(C_{\de,\om})\label{DM2}
\end{equation}
in ${\mathcal D}'((0,T) \times B)$.

Consequently, we can assume 
$$ L_k(P_{\de,\om,\e}) \to \overline{L_k(P_{\de,\om})} \,\, \mbox{in} \,\, C([0,T];L^m_{weak}(B))$$
and approximating $z \log(z) \approx L_k(z)$,
$$P_{\de,\om,\e} \log(P_{\de,\om,\e}) \to \overline{P_{\de,\om} \log(P_{\de,\om})} \,\,\mbox{in} \,\, C([0,T]; L^{\alpha}_{weak}(B))$$
for any $1 \le \alpha < m.$
Taking the difference of \eqref{DM1} and \eqref{DM2}, integrating with respect to $t$  and by using the conditions \eqref{IP} on the initial data and the boundary conditions \eqref{no-slip} we get
\begin{equation}
\begin{split}
\int_{B}\Big(L_k(P_{\de,\om,\e})&-L_{k}(P_{\de,\om})\Big)dx\\
=\int_{0}^{t}\!\!\int_{B}\Big(T_{k}(P_{\de,\om}) \Div\vc{v}_{\de,\om}-&T_{k}(P_{\delta,\omega,\e}) \Div\vc{v}_{\delta,\omega,\e}\Big)dxdt\\
+\int_{0}^{t}\!\!\int_{B}\Big(L'_{k}(P_{\delta,\omega,\e})P_{\delta,\omega,\e}F(C_{\delta,\omega,\e})&-L'_{k}(P_{\de,\om})P_{\de,\om}F(C_{\de,\om})\Big)dxdt.
\end{split}
\label{DM3}
\end{equation}
By combining the monotonicity of $z\to L'_{k}(z)z$ and of the pressure with the Proposition \ref{P1.1} we pass into the limit  for $\e\to 0$ in \eqref{DM3} and, in the spirit of the analysis in \cite{Feireisl-book}, we deduce 
\begin{equation}
\begin{split}
\int_{B}(\overline{L_{k}(P_{\de,\om})}&-L_{k}(P_{\de,\om}))(t)dx\\
\leq\int_{0}^{t}\!\!\int_{B}(T_{k}(P_{\de,\om})&-\overline{T_{k}(P_{\de,\om})})\Div\vc{v}_{\de,\om}dxdt.
\end{split}
\label{DM4}
\end{equation}
By virtue of the Proposition \ref{P6.2}, the right-hand side  of \eqref{DM4} tends to zero as $k \to \infty.$
Accordingly, passing to the limit for $k\to \infty$ we conclude that 
$$\overline{P_{\de,\om} \log(P_{\de,\om})}(t) = P_{\de,\om} \log(P_{\de,\om})(t)\,\, \mbox{for all}\,\, t\in [0,T].$$
 which implies
 $$P_{\de,\om,\e}\longrightarrow P_{\de,\om,\e}\quad \text{a.e. in $(0,T)\times B$}.$$
 One can treat in a similar fashion the densities $Q_{\de,\om,\e}$ and $D_{\de,\om,\e}$ to conclude
 $$Q_{\de,\om,\e}\longrightarrow Q_{\de,\om,\e}\quad \text{a.e. in $(0,T)\times B$},$$
 $$D_{\de,\om,\e}\longrightarrow D_{\de,\om,\e}\quad \text{a.e. in $(0,T)\times B$}.$$

 \subsection{Vanishing density terms  in the ``healthy tissue''}
By using the strong convergence of the previous section, the momentum equation \eqref{w-pressure22} now reads as follows
\begin{equation} \nonumber
\int_{B}\vr_{\delta,\omega}\vc{v}_{\delta,\omega}\vc{\varphi}(\tau,\cdot)dx- \int_{B}{ (\vr\vc{v})_0 \cdot \vc{\varphi} (0, \cdot)\,\dx}
\end{equation}
\begin{equation*}
= \int_0^{\tau}\!\! \int_{B} { \Big( \vr_{\delta,\omega} \vc{v}_{\delta,\omega} \cdot
\partial_t \vc{\varphi} + \vr \vc{v}_{\delta, \omega,} \otimes \vc{v}_{ \delta, \omega} : \Grad_x \vc{\varphi} + \sigma_{\delta} (P_{\delta, \omega},Q_{\delta, \omega},D_{\delta, \omega}) \ \Div \vc{\varphi} \Big)}dxdt 
\end{equation*}
\begin{equation}
 -\int_0^\tau\!\! \int_{B} \left(\mu_{\omega}
 \Grad_x \vc{v}_{\delta, \omega}:\Grad_x \vc{\varphi} + \frac{{\mu}_{\omega}} {K} \vc{v}_{\delta,\omega} \vc{\varphi}\right)\,  \dxdt 
\label{w-pressure222}
\end{equation}
for any test functions as in \eqref{w-pressure22}.

The next issue now is to get rid of the density terms supported in the healthy tissue part $((0,T)\times B)\backslash Q_{T}$.  In order to achieve this aim one has to  describe the evolution of the interface $\Gamma_{\tau}.$ To that effect we employ elements from the so-called {\em level set method}.
The {\em level set method} is a numerical method for tracking interfaces and shapes (cf.\ Osher and Fedwik \cite{OshFed03}). It turns out that the interface $\Gamma_\tau$ can be identified with a component of the  set 
$$\{ \Phi(\tau, \cdot)=0\},$$
while the set $B\setminus \Omega_{\tau}$ correspond to $\{\Phi(\tau,\cdot)>0\}$, with
 $\Phi=\Phi(t,x)$ denoting the  unique solution of the transport equation 
\begin{equation}
\partial_{t}\Phi+\Grad_{x}\Phi(t,x)\cdot\vc{V}=0,
\label{dd1}
\end{equation}
with initial data
$$\Phi_{0}(x)=
\begin{cases}
>0 & \text{for}\  x\in B\backslash\Omega_{0},\\
<0 & \text{for}\  x\in \Omega_{0}\cup(\R^{3}\backslash\overline{B}),
\end{cases}
\qquad \Grad_{x}\Phi_{0}\neq 0\  \text{on $\Gamma_{0}$}.$$
Finally,
\begin{equation}
\begin{split}
\Grad_{x}\Phi(\tau,x)&=\lambda(\tau,x)\vc{n}(x) \qquad \text{for any $x\in \Gamma_{\tau}$}\\
&\\
&\lambda(\tau,x)\geq 0 \qquad \text{for $\tau\in[0,T]$.}
\end{split}
\label{d1}
\end{equation}
In order to estimate the behavior of our approximating sequences on the healthy tissue we need to prove the following lemma.
\begin{lemma}
Let $Z\in L^{\infty}(0,T;L^{2}(B))$, $Z\geq 0$, $\vc{v}\in W^{1,2}_{0}(B)$ satisfying the following equation
\begin{equation} 
\begin{split}
 \int_{B}  \big( Z \varphi(\tau,\cdot)&- Z_0 \varphi(0,\cdot)\big)dx\\
 & = \int_0^{\tau}\! \int_{B} \left( Z \partial_t \varphi + Z \vc{v} \cdot \Grad_x \varphi + \vc{G_Z}  \varphi(t, \cdot) \right) dx dt,
\end{split}
\label{Zeq}
\end{equation}
for any $\tau\in[0,T]$ and any test function $\varphi\in C^{1}_{c}([0,T]\times \R^{3})$  and $\vc{G_Z}$ a linear function of $Z$. Moreover assume that
\begin{equation}
(\vc{v}-\vc{V})(\tau, \cdot)\cdot\vc{n}\big|_{\Gamma_{\tau}}=0\qquad \text{a.e. $\tau\in (0,T)$}
\label{ueq}
\end{equation}
and that
$$Z_{0}\in L^{2}(\R^{3}), \qquad Z_{0}\geq 0\qquad Z_{0}\big|_{B\backslash\Omega_{0}}=0.$$
Then
\begin{equation*}
Z(\tau, \cdot)\big|_{B\backslash\Omega_{\tau}}=0 \qquad \text{for any $\tau\in [0,T]$}.
\label{zeroZ}
\end{equation*}
\label{fondlemma}
\end{lemma}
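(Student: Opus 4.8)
The plan is to transfer the vanishing of $Z$ across the moving interface by means of the level-set function $\Phi$ of \eqref{dd1}--\eqref{d1}. Thanks to the regularity of $\vc{V}$ and of $\Omega_0$, the solution $\Phi$ of \eqref{dd1} is $C^1$ in $(t,x)$ and $C^2$ in $x$ in a neighbourhood of $\bigcup_{\tau\in[0,T]}(\{\tau\}\times\Gamma_\tau)$, and $\Grad_x\Phi(\tau,\cdot)\neq 0$ on $\Gamma_\tau$ for every $\tau$ (along a characteristic the gradient obeys the linear ODE $\tfrac{d}{dt}\Grad_x\Phi=-(\Grad_x\vc{V})^{\mathrm T}\Grad_x\Phi$, hence stays nonzero). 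Consequently, for $\sigma>0$ small the shell $\Sigma_\sigma(t):=\{x\in B:\,0<\Phi(t,x)<\sigma\}$ is a tubular neighbourhood of $\Gamma_t$, foliated diffeomorphically by the level sets $\{\Phi(t,\cdot)=s\}$, $0\le s\le\sigma$, uniformly for $t\in[0,T]$; moreover $\{\Phi(\tau,\cdot)>0\}\supseteq B\setminus\overline{\Omega_\tau}$ while $\{\Phi(\tau,\cdot)=0\}=\Gamma_\tau$ is Lebesgue-null. First I would extend $Z$ by zero outside $B$ (legitimate since $\vc{v}\in W^{1,2}_0(B)$ carries no flux through $\partial B$) and take in \eqref{Zeq} the test function $\varphi(t,x)=h_\sigma(\Phi(t,x))$, where $h_\sigma\in C^\infty(\R)$ is nondecreasing, $0\le h_\sigma\le1$, $h_\sigma\equiv0$ on $(-\infty,0]$, $h_\sigma\equiv1$ on $[\sigma,\infty)$, with $\operatorname{supp}h_\sigma'\subset(0,\sigma)$ and $\norm{h_\sigma'}_{L^\infty}\le C/\sigma$. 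Using $\partial_t\Phi=-\vc{V}\cdot\Grad_x\Phi$, together with the fact that $Z_0$ vanishes on $B\setminus\Omega_0$ whereas $h_\sigma(\Phi_0)$ is supported in $\{\Phi_0>0\}\subset B\setminus\overline{\Omega_0}$, this yields, for every $\tau\in[0,T]$,
\begin{equation}\label{fl-id}
\int_B Z(\tau)\,h_\sigma(\Phi(\tau,\cdot))\,dx=\int_0^\tau\!\!\int_B\Big(Z\,h_\sigma'(\Phi)\,(\vc{v}-\vc{V})\cdot\Grad_x\Phi+\vc{G_Z}\,h_\sigma(\Phi)\Big)\,dx\,dt.
\end{equation}

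The hard part is to show that the first term on the right-hand side of \eqref{fl-id} becomes negligible as $\sigma\to0$; this is the only place where the impermeability condition \eqref{ueq} genuinely enters. I would write $(\vc{v}-\vc{V})\cdot\Grad_x\Phi=\abs{\Grad_x\Phi}\,(\vc{v}-\vc{V})\cdot\vc{\nu}$, with $\vc{\nu}:=\Grad_x\Phi/\abs{\Grad_x\Phi}$ the unit normal to the level set through $x$, and note $\vc{\nu}=\vc{n}$ on $\Gamma_t$ since there $\Grad_x\Phi=\lambda\vc{n}$ with $\lambda>0$ (cf.\ \eqref{d1}). Parametrising $\Sigma_\sigma(t)$ along the foliation by $(y,s)\in\Gamma_t\times(0,\sigma)$, the map $s\mapsto w(y,s):=(\vc{v}-\vc{V})\cdot\vc{\nu}$ belongs to $W^{1,2}(0,\sigma)$ for a.e.\ $y$, its trace at $s=0$ equals $(\vc{v}-\vc{V})\cdot\vc{n}|_{\Gamma_t}=0$ by \eqref{ueq}, and $\abs{\partial_s w}\le C\big(\abs{\Grad_x(\vc{v}-\vc{V})}+\abs{\vc{v}-\vc{V}}\big)$ along the leaves, with $C$ depending only on $\norm{\Phi}_{C^2}$ near the interface, uniformly in $t$. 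The fundamental theorem of calculus, Cauchy--Schwarz in $s$, and a change of variables then give the Hardy-type bound
\begin{equation}\label{fl-hardy}
\norm{(\vc{v}-\vc{V})\cdot\Grad_x\Phi}_{L^2(\Sigma_\sigma(t))}\le C\,\sigma\,\norm{\vc{v}-\vc{V}}_{W^{1,2}(\Sigma_\sigma(t))},\qquad t\in[0,T],
\end{equation}
so that, since $h_\sigma'(\Phi)$ is supported in $\Sigma_\sigma(t)$ with $\norm{h_\sigma'}_{L^\infty}\le C/\sigma$ and $Z\in L^\infty(0,T;L^2(B))$,
\begin{equation}\label{fl-ctrl}
\abs{\int_0^\tau\!\!\int_B Z\,h_\sigma'(\Phi)\,(\vc{v}-\vc{V})\cdot\Grad_x\Phi\,dx\,dt}\le C\int_0^\tau\norm{\vc{v}-\vc{V}}_{W^{1,2}(\Sigma_\sigma(t))}\,dt\;\xrightarrow[\sigma\to0]{}\;0,
\end{equation}
the limit following by dominated convergence, as $\abs{\Sigma_\sigma(t)}\to0$ forces $\norm{\vc{v}(t)-\vc{V}(t)}_{W^{1,2}(\Sigma_\sigma(t))}\to0$ pointwise, dominated by $\norm{\vc{v}(t)-\vc{V}(t)}_{W^{1,2}(B)}\in L^1(0,T)$.

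To conclude, I would let $\sigma\to0$ in \eqref{fl-id}: by \eqref{fl-ctrl}, the bounded pointwise convergence $h_\sigma(\Phi)\to\mathbf{1}_{\{\Phi>0\}}$, dominated convergence, and the weak continuity $Z\in C_{\text{weak}}([0,T];L^2(B))$ supplied by \eqref{Zeq} (so that $Z(\tau)$ makes sense for every $\tau$), this gives $\int_B Z(\tau)\,\mathbf{1}_{\{\Phi(\tau,\cdot)>0\}}\,dx=\int_0^\tau\!\!\int_B\vc{G_Z}\,\mathbf{1}_{\{\Phi(t,\cdot)>0\}}\,dx\,dt$. Since $\vc{G_Z}$ is linear in $Z$ with bounded coefficient and $Z\ge0$, one has $\vc{G_Z}\le cZ$ pointwise, whence $g(\tau):=\int_B Z(\tau)\,\mathbf{1}_{\{\Phi(\tau,\cdot)>0\}}\,dx\ge0$ satisfies $g\in L^\infty(0,T)$, $g(0)=0$, and $g(\tau)\le c\int_0^\tau g(t)\,dt$; Gronwall's inequality forces $g\equiv0$. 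Hence $Z(\tau,\cdot)=0$ a.e.\ on $\{\Phi(\tau,\cdot)>0\}\supseteq B\setminus\overline{\Omega_\tau}$, and since $\Gamma_\tau$ is Lebesgue-null this gives $Z(\tau,\cdot)|_{B\setminus\Omega_\tau}=0$ for all $\tau\in[0,T]$.

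The single real obstacle is the Hardy-type inequality \eqref{fl-hardy}: it is the one step carrying the slip/impermeability constraint \eqref{ueq}, and it requires the diffeomorphic foliation of the thin shell $\Sigma_\sigma(t)$ by level sets of $\Phi$, uniformly in $t$ — which rests on $\Grad_x\Phi\neq0$ on $\Gamma_\tau$ and on the $C^2$-regularity of $\Phi$ near the interface inherited from $\vc{V}$ and $\Omega_0$ — together with the identification of the surface trace of the normal velocity on each leaf with its one-dimensional $W^{1,2}$ trace along the fibres. Everything else is a routine combination of weak-compactness passages and Gronwall's lemma.
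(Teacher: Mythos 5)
Your proof follows essentially the same strategy as the paper's: test the weak formulation \eqref{Zeq} against a cutoff of the level-set function $\Phi$ (you use a smooth $h_\sigma(\Phi)$; the paper uses the Lipschitz cutoff $\big[\min\{\tfrac1\eta\Phi,1\}\big]^+$, which requires an implicit mollification step to land in $C^1_c$), exploit $Z\,\partial_t\Phi+Z\,\vc{v}\cdot\Grad_x\Phi=Z(\vc{v}-\vc{V})\cdot\Grad_x\Phi$, and kill that term with a Hardy-type inequality hinging on the fact that the trace of $(\vc{v}-\vc{V})\cdot\vc{n}$ on $\Gamma_\tau$ vanishes. The only presentational differences are cosmetic: the paper invokes the standard Hardy inequality over $B\setminus\Omega_\tau$ via the membership $(\vc{v}-\vc{V})\cdot\Grad_x\Phi\in W^{1,2}_0(B\setminus\Omega_\tau)$ and the distance function, whereas you re-derive the same bound locally by fibering the thin shell $\Sigma_\sigma(t)$ along level sets and applying the one-dimensional fundamental theorem of calculus; and you make the final dominated-convergence and Gronwall steps explicit, which the paper only sketches (``letting $\eta\to0$ \dots we obtain the result'').
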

\begin{proof}
For a detailed proof we refer the reader to \cite{DT-MixedModel-2013}. We present here only the main idea for completeness. The strategy relies on the  construction of an appropriate test function in the weak formulation \eqref{Zeq}. For given $\eta>0$ we use
\begin{equation}
\varphi=\left[\min\left\{\frac{1}{\eta}\Phi;1\right\}\right]^{+}
\label{specialtest}
\end{equation}
as a test function in   \eqref{Zeq} and we  obtain
\begin{equation} 
\begin{split}
 \int_{B\setminus \Omega_{\tau}} \!\!Z \varphi \, dx = &\frac{1}{\eta}\int_0^{\tau}\! \int_{\{0\leq \Phi(t,x)\leq \eta\}} \left( Z \partial_t \Phi + Z \vc{v} \cdot \Grad_x \Phi + \vc{G_Z}  \Phi\right) dx dt\\
&+ \int_0^{\tau}\! \int_{\{\Phi(t,x)> \eta\}} \vc{G_Z} dxdt.
\end{split}
\label{Zeq2}
\end{equation}
Observing that
$$Z \partial_t \Phi + Z \vc{v} \cdot \Grad_x \Phi=Z(\vc{v}-\vc{V}) \cdot \Grad_x \Phi$$
and using  \eqref{d1} and \eqref{ueq}   we get
\begin{equation}
(\vc{v}-\vc{V}) \cdot \Grad_x \Phi\in W^{1,2}_{0}(B\backslash\Omega_{\tau}) \quad \text {for a.e. $t\in (0,\tau)$.}
\label{d2}
\end{equation}
Introducing the  distance function $\delta=\delta(t,x)$ of the form
\begin{equation}
\delta(t,x)=dist_{\R^{3}}[x, \partial(B\backslash\Omega_{\tau})] \qquad \text{for $t\in [0,\tau]$, $x\in B\backslash\Omega_{\tau}$},
\label{dist}
\end{equation}
relation \eqref{d2} yields
\begin{equation}
\frac{1}{\delta}(\vc{V}-\vc{v}) \cdot \Grad_x \Phi \in L^{2}([0,\tau]\times B\backslash\Omega_{\tau} ).
\label{Hardy}
\end{equation}
Using \eqref{Zeq2}, \eqref{Hardy}, the regularity of  $\vc{V}$ and letting $\eta \to 0$  in \eqref{Zeq2} (noting that  $\vc{G_{Z}}$ is a linear function of $Z$ with  $Z\in L^{\infty}(0,T;L^{2}(B))$) 
we obtain the result. 
\end{proof}
Now we are ready to prove that the proliferating, quiescent, dead cells and the nutrient are vanishing in the healthy tissue. 
\begin{proposition}
Assume that $P_{\delta,\omega}$, $Q_{\delta,\omega}$, $D_{\delta,\omega}$ and $C_{\omega}$ are solutions of \eqref{p-D} and that \eqref{IP} holds, then
\begin{equation}
 P_{\delta,\omega}(\tau, \cdot)|_{B\backslash\Omega_{\tau}}=0, \quad  Q_{\delta,\omega}(\tau, \cdot)|_{B\backslash\Omega_{\tau}}=0, \quad  D_{\delta,\omega}(\tau, \cdot)|_{B\backslash\Omega_{\tau}}=0.
 \label{sdPQD}
\end{equation}
\begin{equation}
C_{\delta,\omega}(\tau, \cdot)|_{B\backslash\Omega_{\tau}}=0.
 \label{sdC}
\end{equation}
\label{fondprop}
\end{proposition}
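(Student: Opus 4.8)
The plan is to obtain both \eqref{sdPQD} and \eqref{sdC} from the abstract vanishing principle of Lemma \ref{fondlemma}, applied successively to $Z=P_{\delta,\omega}$, $Z=Q_{\delta,\omega}$, $Z=D_{\delta,\omega}$, and then — with a modification that absorbs the diffusion term — to $Z=C_{\delta,\omega}$. Thus the substance of the proof is the verification that each of these quantities meets the hypotheses of that lemma; the transport structure of the equations together with the impermeability of the moving interface $\Gamma_\tau$ — encoded through the level-set function $\Phi$ solving \eqref{dd1} and the test function \eqref{specialtest} — then does the rest.

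For $Z=P_{\delta,\omega}$ I would check the four hypotheses of Lemma \ref{fondlemma} as follows. First, $P_{\delta,\omega}\ge 0$ since non-negativity is inherited in the limit $\e\to0$ (cf.\ \eqref{cPQDC} and the a.e.\ convergence established in \S\ref{S6}), and $P_{\delta,\omega}\in L^\infty(0,T;L^2(B))$ because $\beta\ge2$ and $B$ is bounded, so \eqref{bP} gives $P_{\delta,\omega}\in L^\infty(0,T;L^\beta(B))\hookrightarrow L^\infty(0,T;L^2(B))$. Second, $\vc{v}_{\delta,\omega}\in L^2(0,T;W^{1,2}_0(B))$ by \eqref{cv}, and it satisfies the impermeability condition \eqref{ueq}: the penalty estimate \eqref{bp} reads $\int_0^\tau\!\int_{\Gamma_t}|(\vc{v}_{\delta,\omega,\e}-\vc{V})\cdot\vc{n}|^2\,dS\,dt\le c\,\e$, and since the trace map is continuous on $L^2(0,T;W^{1,2}(B))$ and $\vc{v}_{\delta,\omega,\e}\rightharpoonup\vc{v}_{\delta,\omega}$, passing to the limit forces $(\vc{v}_{\delta,\omega}-\vc{V})\cdot\vc{n}|_{\Gamma_\tau}=0$ for a.a.\ $\tau$. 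Third, $P_{\delta,\omega}$ solves \eqref{Zeq} — this is exactly the first relation of \eqref{p-D} — with source $\vc{G_{P_{\delta,\omega}}}$ which by \eqref{G}, written as $F(C_{\delta,\omega})P_{\delta,\omega}$ in \eqref{GG}, is linear in $P_{\delta,\omega}$ with coefficient a bounded function of $C_{\delta,\omega}$, the bound coming from the maximum principle \eqref{mpC}. Fourth, $P_0\ge0$, $P_0\in L^2(\R^3)$ and $P_0|_{B\setminus\Omega_0}=0$ by \eqref{IP}. Lemma \ref{fondlemma} then yields $P_{\delta,\omega}(\tau,\cdot)|_{B\setminus\Omega_\tau}=0$. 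The cases $Z=Q_{\delta,\omega}$ and $Z=D_{\delta,\omega}$ are identical word for word, the sources $\vc{G_{Q_{\delta,\omega}}}=-(K_PC_{\delta,\omega}+K_D(\bar C-C_{\delta,\omega}))Q_{\delta,\omega}$ and $\vc{G_{D_{\delta,\omega}}}=-K_RD_{\delta,\omega}$ again being linear in $Q_{\delta,\omega}$, $D_{\delta,\omega}$ with bounded coefficients; this establishes \eqref{sdPQD}.

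For the nutrient the last relation of \eqref{p-D} is not of the pure transport form \eqref{Zeq}: it carries no advective term $C\,\vc{v}\cdot\Grad_x\varphi$, but it does carry the diffusion term $-\nu_\omega\Grad_xC_{\delta,\omega}\cdot\Grad_x\varphi$. I would therefore rerun the proof of Lemma \ref{fondlemma} with the same test function $\varphi=[\min\{\eta^{-1}\Phi;1\}]^+$: the terms already present reorganize, via $\partial_t\Phi=-\vc{V}\cdot\Grad_x\Phi$, into contributions supported on the tube $\{0<\Phi<\eta\}$ that vanish as $\eta\to0$ exactly as in the cited proof, while the extra diffusion contribution is $-\eta^{-1}\int_0^\tau\!\int_{\{0<\Phi<\eta\}}\nu_\omega\Grad_xC_{\delta,\omega}\cdot\Grad_x\Phi\,dx\,dt$. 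For $\omega$ fixed one has $\nu_\omega\ge\underline{\nu}_\omega>0$, so $\Grad_xC_{\delta,\omega}\in L^2((0,T)\times B)$ by \eqref{bC}; combining this with $\Grad_x\Phi=\lambda\,\vc{n}$ on $\Gamma_\tau$ from \eqref{d1} and a Hardy-type bound near $\Gamma_\tau$ in the spirit of \eqref{Hardy} lets one pass to the limit $\eta\to0$. A Gronwall argument (the source $-C_{\delta,\omega}$ being linear and $C_{\delta,\omega}\in L^\infty(0,T;L^2(B))$ by \eqref{mpC}) then gives \eqref{sdC}, and in particular $C_{\delta,\omega}$ inherits the boundary value $0$ on $\Gamma_\tau$ in the sense of traces.

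I expect the nutrient step to be the main obstacle. For $P_{\delta,\omega}$, $Q_{\delta,\omega}$, $D_{\delta,\omega}$ everything collapses cleanly to Lemma \ref{fondlemma}. For $C_{\delta,\omega}$, however, controlling the diffusion boundary layer $\eta^{-1}\int_{\{0<\Phi<\eta\}}\nu_\omega\Grad_xC_{\delta,\omega}\cdot\Grad_x\Phi$ as the tube around $\Gamma_\tau$ collapses is genuinely more delicate than the first-order terms handled in that lemma — one only disposes of an $L^2$ bound on $\Grad_xC_{\delta,\omega}$ — so it is here that the interplay with the moving interface must be treated with care.
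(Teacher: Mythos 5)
Your treatment of $P_{\delta,\omega}$, $Q_{\delta,\omega}$, $D_{\delta,\omega}$ is essentially the paper's proof: both invoke Lemma \ref{fondlemma}, and both justify its hypotheses by noting that $\beta\ge 2$ together with the bounds \eqref{bP}--\eqref{bD} puts each density in $L^\infty(0,T;L^2(B))$, that the sources \eqref{G} are linear in the respective density with coefficients bounded thanks to \eqref{mpC}, that the impermeability trace condition follows from \eqref{bp} in the limit $\e\to 0$, and that \eqref{IP} supplies the required initial-data hypotheses. That part checks out.

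For the nutrient you diverge from the paper, and your route has a genuine gap that you half-recognize but underestimate. The paper does not attempt to push the level-set test function $\varphi=[\min\{\eta^{-1}\Phi;1\}]^+$ through the diffusion equation; instead it observes that $C_{\delta,\omega}$ solves a parabolic problem on the time-dependent region $B\setminus\Omega_\tau$ with vanishing initial data (from \eqref{IP}) and vanishing boundary data, and concludes $C_{\delta,\omega}\equiv 0$ there by well-posedness --- a one-line uniqueness argument. Your plan to rerun the proof of Lemma \ref{fondlemma} hits a wall at the diffusion term. The Hardy bound \eqref{Hardy} in that lemma works because the impermeability condition forces $(\vc{v}-\vc{V})\cdot\Grad_x\Phi$ to lie in $W^{1,2}_0(B\setminus\Omega_\tau)$, i.e.\ to vanish at the interface, and this is precisely what lets one divide by the distance to $\Gamma_\tau$ and still remain in $L^2$. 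There is no analogous cancellation for $\nu_\omega\Grad_x C_{\delta,\omega}\cdot\Grad_x\Phi$: one only controls $\Grad_x C_{\delta,\omega}$ in $L^2((0,T)\times B)$ via \eqref{bC}, with no reason for it to vanish on $\Gamma_\tau$. Crudely, Cauchy--Schwarz over the collar $\{0<\Phi<\eta\}$, which has measure $O(\eta)$, yields only $\eta^{-1}\cdot O(\eta^{1/2})=O(\eta^{-1/2})$, which diverges as $\eta\to 0$. So the diffusion boundary layer is not merely delicate but blocks your argument as stated, and the decisive move the paper makes is to abandon the transport-style argument for the nutrient and appeal directly to parabolic uniqueness.
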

\begin{proof}
The proof of \eqref{sdPQD} follows applying the  Lemma \ref{fondlemma}.  In fact since $\beta\geq 2$ from \eqref{bP}, \eqref{bQ},\eqref{bD} we have that  $P_{\delta,\omega}$, $Q_{\delta,\omega}$, $D_{\delta,\omega}$ are bounded in $L^{\infty}(0,T;L^{2}(B))$, for any fixed $\delta$. Moreover by taking into account \eqref{G} and \eqref{mpC} 
the functions $\vc{G_P}, \vc{G_Q}, \vc{G_D}$ fulfill the requirements of the Lemma \ref{fondlemma}.
In order to prove \eqref{sdC} it is enough to observe that $C_{\delta,\omega}$ is a solution in $B\backslash\Omega_{\tau}$ of a parabolic equation with vanishing initial  and boundary data.
\end{proof}
Now taking into account the Proposition \ref{fondprop} the equation for the nutrients becomes 
\begin{equation*}
\int_{\Omega_{\tau}} C_{\delta,\omega} \varphi(\tau,\cdot) dx -  \int_{\Omega_{0}} C_0 \varphi(0,\cdot) dx
\end{equation*}
\begin{equation}
= \int_0^{\tau}\!\! \int_{\Omega_{t}} \left( C_{\delta,\omega} \partial_t \varphi - \nu_{\omega}\Grad_{x} C_{\delta,\omega} \cdot \Grad_x \varphi -C_{\delta,\omega} \varphi(t, \cdot) \right) dx dt 
\label{pC13}
\end{equation}
for any $\tau \in [0,T]$ and any test function $\vc{\varphi} \in C_c^{\infty}([0,T] \times B; \mathbb{R}^3)$, while the momentum equation \eqref{w-pressure222} becomes as follows,
\begin{equation*} 
\int_{\Omega_{\tau}}\vr_{\delta,\omega}\vc{v}_{\delta,\omega}\vc{\varphi}(\tau,\cdot)dx- \int_{\Omega_{0}}{ (\vr\vc{v})_0 \cdot \vc{\varphi} (0, \cdot)\,\dx}
\end{equation*}
\begin{equation*}
= \int_0^{\tau}\!\! \int_{\Omega_{t}} { \Big( \vr_{\delta,\omega} \vc{v}_{\delta,\omega} \cdot
\partial_t \vc{\varphi} + \vr \vc{v}_{\delta, \omega,} \otimes \vc{v}_{ \delta, \omega} : \Grad_x \vc{\varphi} + \sigma_{\delta} (P_{\delta, \omega},Q_{\delta, \omega},D_{\delta, \omega}) \ \Div \vc{\varphi} \Big)}dxdt 
\end{equation*}
\begin{equation*}
 -\int_{0}^{\tau}\!\! \int_{\Omega_{t}} \left(\mu_{\omega}
 \Grad_x \vc{v}_{\delta, \omega}:\Grad_x \vc{\varphi} + \frac{{\mu}_{\omega}} {K} \vc{v}_{\delta,\omega} \vc{\varphi}\right)\,  \dxdt 
\end{equation*}
\begin{equation}
 -\int_{0}^{\tau}\!\! \int_{B\backslash\Omega_{t}} \left(\mu_{\omega}
 \Grad_x \vc{v}_{\delta, \omega}:\Grad_x \vc{\varphi} + \frac{{\mu}_{\omega}} {K} \vc{v}_{\delta,\omega} \vc{\varphi}\right)\,  \dxdt 
\label{w-pressure2222}
\end{equation}

\section{Vanishing viscosity limit $\omega \to 0$}\label{S7}
The next step in the proof is to get rid of the last integrals in  \eqref{w-pressure2222}, so we have to perform the limit $\omega \to 0$. By using  \eqref{bv} we have that
\begin{equation}
\begin{split}
\int_{\Omega_{t}}\mu\left(|\nabla_{x}\vc{v}_{\omega}|^{2}+|\vc{v}_{\omega}|^{2}\right)dx\leq c\\
\int_{B\backslash\Omega_{t}}\mu_{\omega}\left(|\nabla_{x}\vc{v}_{\omega}|^{2}+|\vc{v}_{\omega}|^{2}\right)dx\leq c,
\end{split}
\label{visc2}
\end{equation}
The estimates \eqref{visc2}  with a standard computations yields that
\begin{equation}
\int_{B\backslash\Omega_{t}}\mu_{\omega}\left(\Grad_x \vc{v}_{\omega}  : \Grad_x \vc{\varphi}  +\vc{v}_{\omega,\e} \vc{\varphi}\right)dx\to 0 
\quad \text{as $\omega\to 0$},
\label{visc5}
\end{equation}
Now, by repeating the same arguments of the previous sections and taking into account that now we only need the compactness of the densities only in the tumor region we let $\omega\to 0$ and we get that the nutrient has the form
\begin{equation*}
\int_{\Omega_{\tau}} C_{\delta} \varphi(\tau,\cdot) dx -  \int_{\Omega_{0}} C_0 \varphi(0,\cdot) dx
\end{equation*}
\begin{equation}
= \int_0^{\tau}\!\! \int_{\Omega_{t}} \left( C_{\delta} \partial_t \varphi - \nu\Grad_{x} C_{\delta} \cdot \Grad_x \varphi - C_{\delta} \varphi(t, \cdot) \right) dx dt .
\label{pC11}
\end{equation}
The momentum equation \eqref{w-pressure2222} is now the following,

\begin{equation*} 
\int_{\Omega_{\tau}}\vr_{\delta}\vc{v}_{\delta}\vc{\varphi}(\tau,\cdot)dx- \int_{\Omega_{0}}{(\vr \vc{v})_0 \cdot \vc{\varphi} (0, \cdot)\,\dx}
\end{equation*}
\begin{equation*}
= \int_0^{\tau}\!\! \int_{\Omega_{t}} { \Big( \vr_{\delta} \vc{v}_{\delta} \cdot
\partial_t \vc{\varphi} + \vr \vc{v}_{\delta} \otimes \vc{v}_{\delta} : \Grad_x \vc{\varphi} + \sigma_{\delta} (P_{\delta},Q_{\delta},D_{\delta}) \ \Div \vc{\varphi} \Big)}dxdt 
\end{equation*}
\begin{equation}
 -\int_{0}^{\tau}\!\! \int_{\Omega_{t}} \left(\mu
 \Grad_x \vc{v}_{\delta}:\Grad_x \vc{\varphi} + \frac{{\mu}} {K} \vc{v}_{\delta} \vc{\varphi}\right)\,  \dxdt. 
\label{w-pressure22222}
\end{equation}

\section{Vanishing artificial pressure $\delta \to 0$}\label{S8}
Finally, the last step in our proof is to get rid of the artificial pressure term $\delta(P^{\beta}_{\delta}+Q^{\beta}_{\delta}+D^{\beta}_{\delta})$.  In order to pass into the limit we need the strong convergence of the cell densities. The main part consists in showing  that the oscillation defect  measure 
$$
\sup_{k\geq 0}\left(\limsup_{\de \to 0}\|T_k(Z_{\de}) - T_k(Z) \|_{L^{m +1}((0,T) \times \Omega)} \right)
$$

is bounded. This can be done in the same spirit of the earlier section (see also \cite{Donatelli-Trivisa-2006}). Now, we are ready to let $\delta\to 0$ in the weak formulations \eqref{pC13},  \eqref{w-pressure22222} and we complete the proof of Theorem  \ref{T2.2}.
\section{Acknowledgments}
The work of D.D. was supported  by the Ministry of Education, University and Research (MIUR), Italy under the grant PRIN 2012- Project N. 2012L5WXHJ, \emph{Nonlinear Hyperbolic Partial Differential Equations, Dispersive and Transport Equations: theoretical and applicative aspects}. Ê K.T.  gratefully acknowledges the support  in part by the National Science Foundation under the grant DMS-1211519 and by the Simons Foundation under the Simons Fellows in Mathematics Award 267399. 
Part of this research was performed during the visit of K.T. at University of L'Aquila which was supported under the grant PRIN 2012- Project N. 2012L5WXHJ, \emph{Nonlinear Hyperbolic Partial Differential Equations, Dispersive and Transport Equations: theoretical and applicative aspects}.
This work was completed while K.T. was resident at \'{E}cole Normale Sup\'{e}rieure de  Cachan as a Simons Fellow. K.T. is grateful to L.\ Desvilettes and the CMLA Lab for providing a very stimulating environment for scientific research  and to the Institute Henri Poincar\'{e} for the hospitality.

\end{document}